\def\mllap{\mathpalette\mllapinternal}
\def\mllapinternal#1#2{\llap{$\mathsurround=0pt#1{#2}$}}
\newcommand{\pushright}[1]{\ifmeasuring@#1\else\omit\hfill$\displaystyle#1$\fi\ignorespaces}
\newcommand{\pushleft}[1]{\ifmeasuring@#1\else\omit$\displaystyle#1$\hfill\fi\ignorespaces}
\newcommand{\oo}{\mathit{o}}
\newcommand{\OO}{\mathcal{O}}
\g@addto@macro\th@plain{\thm@headpunct{\ }}
\g@addto@macro\th@definition{\thm@headpunct{\ }}
\def\maketag@@@#1{\hbox{\m@th\normalfont\normalsize#1}}
\newcommand{\definition}[1]{\textbf{#1}}
\theoremstyle{plain}\newtheorem{theorem}{Theorem}
\theoremstyle{plain}\newtheorem{corollary}{Corollary}
\theoremstyle{definition}\newtheorem*{proof}{Proof}
\theoremstyle{definition}\newtheorem{remark}{Remark}
\begin{document}

\title{Spectral Statistics of Lattice Graph\\ Percolation Models}
\author{Stephen Kruzick and Jos\'{e} M. F. Moura,~\IEEEmembership{Fellow,~IEEE}  % <-this % stops a space
\thanks{The authors are with the Department of Electrical and Computer Engineering, Carnegie Mellon University, Pittsburgh, PA 15213 USA (ph: (412)2686341; e-mail: skruzick@andrew.cmu.edu; moura@ece.cmu.edu).} % <-this % stops a %space
\thanks{This work was supported by NSF grant \#~CCF1513936.}
}
\maketitle

\begin{abstract}
In graph signal processing, the graph adjacency matrix or the graph Laplacian commonly define the shift operator. The spectral decomposition of the shift operator plays an important role in that the eigenvalues represent frequencies and the eigenvectors provide a spectral basis. This is useful, for example, in the design of filters.  However, the graph or network may be uncertain due to stochastic influences in construction and maintenance, and, under such conditions, the eigenvalues of the shift matrix become random variables.  This paper examines the spectral distribution of the eigenvalues of random networks formed by including each link of a $D$-dimensional lattice supergraph independently with identical probability, a percolation model.  Using the stochastic canonical equation methods developed by Girko for symmetric matrices with independent upper triangular entries, a deterministic distribution is found that asymptotically approximates the empirical spectral distribution of the scaled adjacency matrix for a model with arbitrary parameters.  The main results characterize the form of the solution to an important system of equations that leads to this deterministic distribution function and significantly reduce the number of equations that must be solved to find the solution for a given set of model parameters.  Simulations comparing the expected empirical spectral distributions and the computed deterministic distributions are provided for sample parameters.
\end{abstract}

\begin{IEEEkeywords}
 graph signal processing, random networks, random graph, random matrix, eigenvalues, spectral statistics, stochastic canonical equations
\end{IEEEkeywords}

\section{Introduction}

Much of the increasingly vast amount of data available in the modern day exhibits nontrivial underlying structure that does not fit within classical notions of signal processing.  The theory of graph signal processing has been proposed for treating data with relationships and interactions best described by complex networks.  Within this framework, signals manifest as functions on the nodes of a network.  The shift operator used to analyze these signals is provided by a matrix related to the network structure, such as the adjacency matrix, Laplacian matrix, or normalized versions thereof \cite{ASan1}\cite{DShu1}.  Decomposition of a signal according to a basis of eigenvectors of the shift operator serves a role similar to that of the Fourier Transform in classical signal processing \cite{ASan2}.  In this context, multiplication by polynomial functions of the chosen shift operator matrix performs shift invariant filtering \cite{ASan1}.

The eigenvalues of the shift operator matrix play the role of graph frequencies and are important in the design and analysis of graph signals and graph filters.  If $W$ is a diagonalizable graph shift operator matrix with eigenvalue $\lambda$ for eigenvector $\mathbf{v}$ such that $W\mathbf{v}=\lambda \mathbf{v}$ and, for example, if a filter is implemented on the network as $P\left(W\right)$ where $P$ is a polynomial, then $P\left(W\right)$ has corresponding eigenvalue $P\left(\lambda\right)$ for $\mathbf{v}$ by simultaneous diagonalizability of powers of $W$ \cite{ASan4}.  The framework of graph signal processing regards $P\left(\lambda\right)$ as the frequency response of the filter \cite{ASan3}.  Hence, knowledge of the eigenvalues of $W$ informs the design of the filter $P$ when the eigenvalues of $P\left(W\right)$ should satisfy desired properties.  Furthermore, the eigenvalues relate to a notion of signal complexity known as the signal total variation, which has several slightly different definitions depending on context \cite{DShu1}\cite{ASan2}\cite{ASan3}\cite{SChe1}.  For purposes of motivation, taking the shift operator to be the row-normalized adjacency matrix $\widehat{A}$, define the $l_p$ total variation of the signal $\mathbf{x}$ as
\begin{equation}\label{TotalVariation}
\operatorname{TV}_{\mathcal{G}}\left(\mathbf{x}\right)=\left\|\left(I-\widehat{A}\right)\mathbf{x}\right\|_p^p=\left\|\widehat{L}\mathbf{x}\right\|_p^p
\end{equation}
 which sums over all network nodes the $p$th power of the absolute difference between the value of the signal $\mathbf{x}$ at each node and the average of the value of $\mathbf{x}$ at neighboring nodes \cite{ASan3}\cite{SChe1}.  Thus, if $\mathbf{v}$ is a normalized eigenvector of the row-normalized Laplacian $\widehat{L}$ with eigenvalue $\lambda$, $\mathbf{v}$ has total variation $|\lambda|^p$.  The eigenvectors that have higher total variation can be viewed as more complex signal components in much the same way that classical signal processing views higher frequency complex exponentials.

As an application, consider a connected, undirected network on $N$ nodes with normalized Laplacian $\widehat{L}$ with the goal of achieving distributed average consensus via a graph filter.  For such a network, there is a simple Laplacian eigenvalue $\lambda_1(\widehat{L})=0$ corresponding to the averaging eigenvector $\mathbf{v}_1=\mathbf{1}$ and other eigenvalues $0<\lambda_i(\widehat{L})\leq2$ for $2\leq i \leq N$.   Any filter $P$ such that $P\left(0\right)=1$ and $|P(\lambda_i(\widehat{L}))|<1$ for $2\leq i \leq N$ will asymptotically transform an initial signal $\mathbf{x}_0$ with average $\mu$ to the average consensus signal $\mathbf{\bar{x}}=\mu \mathbf{1}$ upon iterative application \cite{EKok1}.  If the eigenvalues $\lambda_i(\widehat{L})$ for $2\leq i \leq N$ are known, consensus can be achieved in finite time by selecting $P$ to be the unique polynomial of degree $N-1$ with $P\left(0\right)=1$ and $P(\lambda_i(\widehat{L}))=0$ \cite{ASan4}.  Note that the averaging eigenvector has total variation $0$ by the above definition \eqref{TotalVariation} and that all other, more complex eigenvectors are completely removed by the filter.  Thus, a finite time consensus filter represents the most extreme version of a non-trivial lowpass filter.  With polynomial filters of smaller fixed degree $d$, knowledge of the eigenvalues can be used to design filters of a given length for optimal consensus convergence rate, which is given by
\begin{equation}
\rho\left(P\left(\widehat{L}\right)\right)^{1/d}=\max\limits_{2\leq i \leq N}\left|P\left(\lambda_i\left(\widehat{L}\right)\right)\right|^{1/d}
\end{equation}
as studied in \cite{EKok1}.  This can also be attempted in situations in which the graph is a random variable, leading to uncertainty in the eigenvalues.  For example, \cite{EKok1} also proposes two methods to accomplish this for certain random switching network models.  One relies on the eigenvalues of the expected iteration matrix.  The other attempts to filter over a wide range of possible eigenvalues without taking the model into account.  Neither takes into account any information about how the eigenvalue distribution spreads.  This type of information could be relevant to graph filter design, especially when the network does not switch too often relative to the filter length, rendering static random analysis applicable.

The empirical spectral distribution $F_W\left(x\right)$ of a Hermitian matrix $W$, which counts the fraction of eigenvalues of $W$ in the interval $\left(-\infty,x\right]$, describes the set of eigenvalues \cite{VGir1}.  For matrices related to random graphs, the empirical spectral distributions are, themselves, function-valued random variables.  Often, it is not possible to know the joint distribution of the eigenvalues, but the behavior of the empirical spectral distribution can be described asymptotically.  For instance, the empirical spectral distributions of Wigner matrices converge to the integral of a semicircle asymptotically in the matrix size as described by Wigner's semicircle law \cite{EWig1}.  In other cases, a limiting distribution may or may not exist, but a sequence of deterministic functions can still approximate the empirical spectral distribution \cite{RCou1}.  The stochastic canonical equation techniques of Girko detailed in \cite{VGir1}, which allow analysis of matrices with independent but non-identically distributed elements as necessary when studying the adjacency matrices of many random graph models, provide a method to obtain such deterministic equivalents.

\begin{figure}[t]
\includegraphics[width=.8\textwidth]{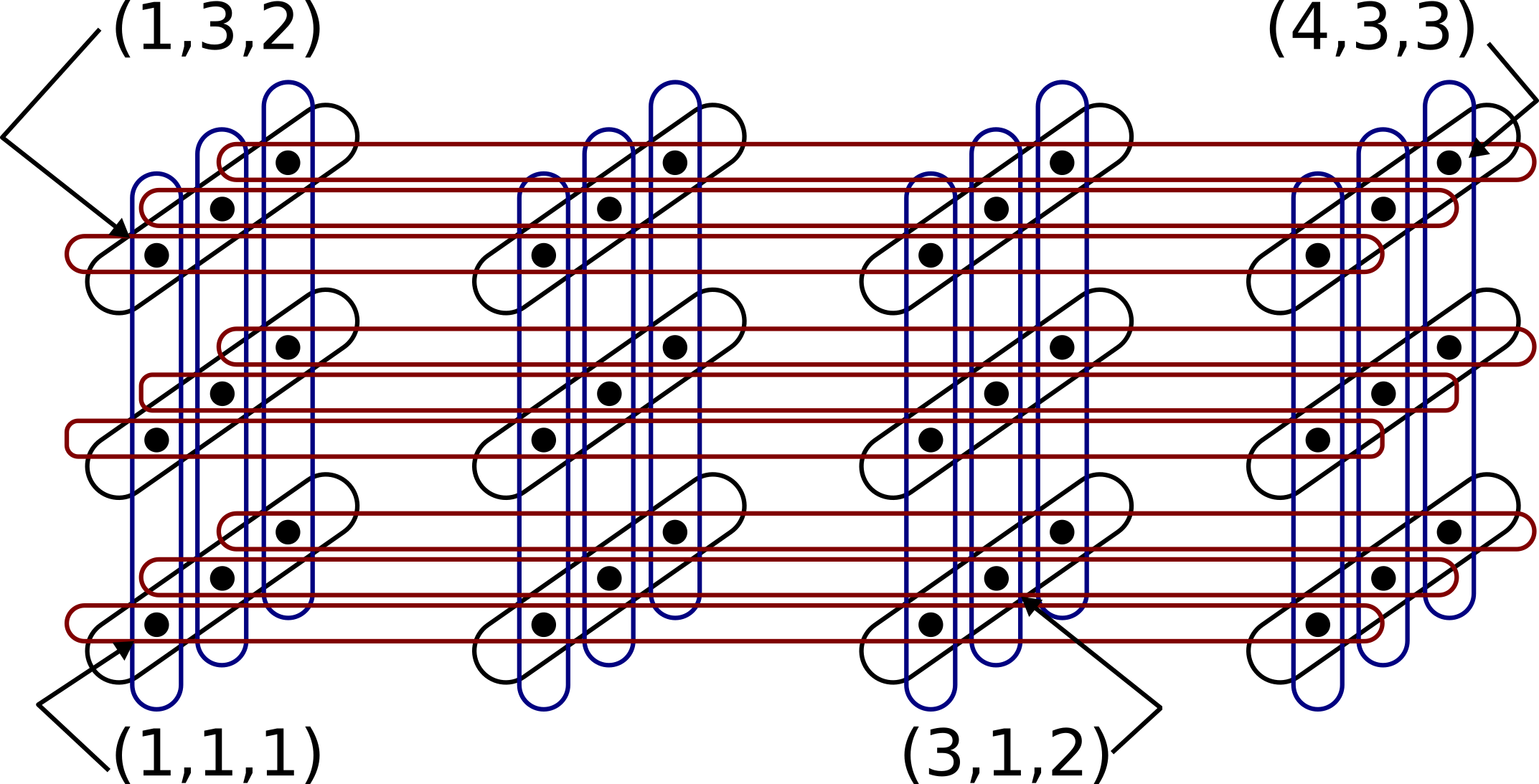}
\caption{The illustration shows an example lattice graph with three dimensions of size $4 \times 3 \times 3$ with some node tuples labeled.  Each group of circled nodes, which differ by exactly one symbol, represents a complete subgraph.}
\label{ExLat}
\end{figure}

This paper analyzes the empirical eigenvalue distributions of the adjacency matrices of random graphs formed by independent random inclusion or exclusion of each link in a certain supergraph, a bond percolation model \cite{GGri1}.  Specifically, the paper examines adjacency matrices of percolation models of $D$-dimensional lattice graphs, in which each $D$-tuple with $M_d$ possible symbols in the $d$th dimension has an associated graph node and in which two nodes are connected by a link if the corresponding $D$-tuples differ by only one symbol.  These graphs generalize other commonly encountered graphs, such as complete graphs and the cubic lattice graph \cite{RLas1}, to an arbitrary lattice dimension.  An illustration for the three dimensional case appears in Figure \ref{ExLat}, in which a complete graph connects each group of circled nodes.  The stochastic canonical equation techniques of Girko provide the key mathematical tool employed here to achieve our results.  The resulting deterministic approximations to the adjacency matrix empirical spectral distribution can, furthermore, provide information about the empirical spectral distributions of the normalized adjacency matrices and, thus, the normalized Laplacians.   This work is similar to that of \cite{KAvr1}, which analyzes the adjacency matrices of a different random graph model known as the stochastic block model using similar tools and suggests applications related to the study of epidemics and algorithms based on random walks.

Section \ref{SecBackground} discusses relevant background information on random graphs and the spectral statistics of random matrices.  It also introduces an important mathematical result from \cite{VGir1}, presented in Therorem~\ref{GirkoK01Thm}, which can be used to find deterministic functions that approximate the empirical spectral distribution of certain large random matrices by relating it to the solution of a system of equations \eqref{GirkoK01Eq}.  Section \ref{SecMainResults} introduces lattice graphs and addresses application of Theorem \ref{GirkoK01Thm} to scaled adjacency matrices of percolation models based on arbitrary $D$-dimensional lattice graphs.  The main contributions of this paper appear in Theorem \ref{MainThm}, which derives the form of the solution to the system of equations \eqref{GirkoK01Eq} for arbitrary parameters, and also in Corollary \ref{MainCor}, which describes the particular solution for given model parameters.  The relationship of the empirical spectral distribution of the adjacency matrix to those of the symmetrically normalized adjacency matrix and the row-normalized adjacency matrix is also noted.  For some example lattice graphs, these results are used to provide deterministically calculated distribution functions that are compared against the expected empirical spectral distributions.  Finally, Section \ref{SecConclusion} provides concluding analysis.

\section{Background}\label{SecBackground}

\subsection{Random Graphs}

An \definition{undirected graph (network)} $\mathcal{G}$ consists of an ordered pair $\left(\mathcal{V},\mathcal{E}\right)$ in which $\mathcal{V}$ denotes a set of \definition{nodes (vertices)} and $\mathcal{E}$ denotes a set of \definition{undirected links (edges)}.  These undirected links are unordered pairs $\left\{v_i,v_j\right\}=\left\{v_j,v_i\right\}$ of nodes $v_i,v_j\in \mathcal{V}$ with $v_i\neq v_j$.  Note that \definition{self-loops}, links formed as $\left\{v,v\right\}$, are excluded from this definition.  The graph \definition{adjacency matrix} $A\left(\mathcal{G}\right)$ encapsulates the graph information with $A\left(\mathcal{G}\right)_{ij}=1$ if $\left\{v_j,v_i\right\}\in \mathcal{E}$ and $A\left(\mathcal{G}\right)_{ij}=0$ if $\left\{v_j,v_i\right\}\notin \mathcal{E}$.  A \definition{subgraph} $\mathcal{G}_{\mathrm{sub}}$ of an undirected graph is a graph $\left(\mathcal{V}_{\mathrm{sub}},\mathcal{E}_{\mathrm{sub}}\right)$ with $\mathcal{V}_{\mathrm{sub}}\subseteq \mathcal{V}$ and $\mathcal{E}_{\mathrm{sub}}\subseteq \mathcal{E}$, and $\mathcal{G}$ is said to be a \definition{supergraph} of $\mathcal{G}_{\mathrm{sub}}$.  \definition{Random undirected graphs}, which are undirected graph-valued random variables, model uncertainty in the network topology.  There are many commonly studied random graph models, which may be specified by a probability distribution on the collection of possible link sets $P\left(\mathcal{V}\times \mathcal{V}\right)$ between a given number of nodes $\left|\mathcal{V}\right|=N$.

One of the most frequently considered random graph models, the \definition{Erd\"{o}s-R\'{e}nyi model}, considers a random graph $\mathcal{G}_{\textrm{ER}}\left(N,p\left(N\right)\right)$ on $N$ nodes formed by including each link between two different nodes according to independent Bernoulli trials that succeed with probability $p\left(N\right)$, which is often allowed to vary with $N$ \cite{BBol1}.  This distribution has important properties regarding asymptotic connectedness behavior as $N$ increases that condition on the relationship between $N$ and $p\left(N\right)$ \cite{EGil1}.  Additionally, the relationship between Erd\"{o}s-R\'{e}nyi random graphs and Wigner matrices leads to asymptotic spectral statistics that may be characterized \cite{XDin1}.

Similarly, starting from an arbitrary supergraph $\mathcal{G}_{\mathrm{sup},N}$ with $N$ nodes, the graph-valued random variable $\mathcal{G}_{\mathrm{perc}}\left(\mathcal{G}_{\mathrm{sup},N},p\left(N\right)\right)$ that results from including each link of $G_{\mathrm{sup},N}$ according to independent Bernoulli trials that succeed with probability $p\left(N\right)$ is known as a \definition{bond percolation model}.  Thus, an Erd\"{o}s-R\'{e}nyi model is a bond percolation model with a complete supergraph.  Typically, study of bond percolation models concerns their asymptotic connectivity behavior, although that is not the purpose of this paper.  Here, the terminology is simply used to refer to the type of random graph models considered in this paper.

\subsection{Spectral Statistics}

The eigenvalues of matrices, such as the graph adjacency matrix, that respect the network structure can be important in the design of distributed algorithms.  Hence, an understanding of the eigenvalues of relevant random symmetric matrices is desired.  For certain Hermitian random matrix models parameterized by matrix dimension $N\times N$, results are available concerning the asymptotic behavior of the eigenvalues as $N$ increases through a function called the empirical spectral distribution.  The definitions employed in this paper are provided below.

Note that the eigenvalues of a Hermitian matrix must be real valued.  Given a $N\times N$  Hermitian matrix-valued random variable $W_N$, order the $N$ random eigenvalues such that $\lambda_1\left(W\right) \leq \ldots \leq \lambda_i\left(W_N\right) \leq \lambda_{i+1}\left(W_N\right) \leq \ldots \leq \lambda_N\left(W_N\right)$.  The \definition{empirical spectral measure} of $W_N$ given by
\begin{equation}\label{SpectralStatistics:EQ:ESM}
	\mu_{w_N}\left(X\right)=\frac{1}{N}\sum_{i=1}^N \chi\left(\lambda_i\left(W_N\right)\in X\right)
\end{equation}
assigns to each subset $X\subseteq\mathbb{R}$ the fraction of eigenvalues that appear in $X$ using the indicator function $\chi$.  Related to this function, the \definition{empirical spectral distribution} of $W_N$ given by \cite{RCou1}
\begingroup
%\thinmuskip=\muexpr\thinmuskip*1/16\relax
%\medmuskip=\muexpr\medmuskip*1/16\relax
 \begin{equation}\label{SpectralStatistics:EQ:ESDi}
 \begin{aligned}
 	F_{W_N}(x)
 	&=\mu_{W_N}\left(\left(-\infty,x\right]\right)\\
 	&=\frac{1}{N}\sum_{i=1}^N \chi\left(\lambda_i\left(W_N\right)\leq x\right)
 \end{aligned}
 \end{equation}
\endgroup
counts the number of eigenvalues in the interval $\left(-\infty,x\right]$, and the \definition{empirical spectral density} of $W_N$ is given by \cite{RCou1}
 \begin{equation}\label{SpectralStatistics:EQ:ESDe}
 \begin{aligned}
	f_{W}\left(x\right)
	&=\frac{d}{dx}F_{W_N}\left(x\right)\\
	&=\frac{1}{N}\sum_{i=1}^N \delta\left(x-\lambda_i\left(W_N\right)\right).
 \end{aligned}
 \end{equation}
 indicates the locations of the eigenvalues using the Dirac delta function $\delta$.  The empirical spectral measure and empirical spectral distribution define each other and are often referred to interchangeably.  Note that each of these definitions produces a function-valued random variable.  From them can be defined the \definition{expected empirical spectral measure} $\mu_{\mathrm{exp},W_N}=\operatorname{E}\left[\mu_{W_N}\right]$, the \definition{expected empirical spectral distribution} $F_{\mathrm{exp},W_N}=\operatorname{E}\left[F_{W_N}\right]$, and the \definition{expected empirical spectral density} $f_{\mathrm{exp},W_N}=\operatorname{E}\left[f_{W_N}\right]$.

Analysis of the empirical spectral distribution for Hermitian matrices often involves the \definition{Stieltjes transform} defined as below \cite{RCou1}.
\begin{equation}
S_F\left(z\right)=\int_{-\infty}^{\phantom{-}\infty}\frac{1}{x-z}dF\left(x\right), \quad \operatorname{Im}\left\{z\right\}\neq 0
\end{equation}
The Stieltjes transform can be inverted to obtain the corresponding density by computing the following expression \cite{RCou1}.
\begin{equation}
F\left(x\right)=\lim_{\epsilon\rightarrow 0^+}\frac{1}{\pi}\int_{-\infty}^{x}\operatorname{Im}\left\{S_F\left(\lambda+\epsilon i\right)\right\}d\lambda
\end{equation}
\begin{equation}
f\left(x\right)=\lim_{\epsilon\rightarrow 0^+}\frac{1}{\pi}\operatorname{Im}\left\{S_F\left(x+\epsilon i\right)\right\}
\end{equation}
For the empirical spectral distribution $F_N$ of an $N\times N$ Hermitian matrix $W_N$, the Stieltjes function computes to
\begingroup
\thinmuskip=\muexpr\thinmuskip*1/16\relax
\medmuskip=\muexpr\medmuskip*1/16\relax
\begin{equation}
S_{F_{W_N}}\left(z\right)=\frac{1}{N}\operatorname{tr}\left(\left(W_N-zI_N\right)^{-1}\right), \enskip \operatorname{Im}\left\{z\right\}\neq 0
\end{equation}
\endgroup
which is the normalized trace of the \definition{resolvent} $\left(W_N-zI_N\right)^{-1}$ \cite{RCou1}.

Given a sequence of Hermitian matrix-valued random variables $\{W_N\}_{N=1}^{\infty}$, the sequence has \definition{limiting spectral measure} $\mu_{\mathrm{lim},W_N}$ provided $\mu_{W_N}$ converges weakly to $\mu_{\mathrm{lim},W_N}$.  Similarly, it has \definition{limiting spectral distribution} $F_{\mathrm{lim},W_N}$ provided $F_{W_N}$ converges weakly to $F_{\mathrm{lim},W_N}$.  In some circumstances in which these limits may or may not exist, the spectral statistics can still be describable by a sequence of deterministic functions.  Given a sequence of Hermitian matrix-valued random variables $W_N$, a \definition{deterministic equivalent} for $W_N$ with respect to the sequence of functionals $g_N$ is a sequence of deterministic matrices $W_{N}^{\circ}$ such that
\begin{equation}
\lim_{N\rightarrow\infty}{\left(g_N\left(W_N\right)-g_N\left(W_{N}^{\circ}\right)\right)}=0
\end{equation}
 almost surely \cite{RCou1}.  Additionally, the sequence of deterministic values $g_N\left(W_{N}^{\circ}\right)$ is called a deterministic equivalent of the sequence of random values $g_N\left(W_N\right)$~\cite{RCou1}.

%In order to study the empirical spectral distribution, the functional may be chosen to be $g_N\left(W_N\right)=S_{F_{W_N}}(z)$, as in the work of Girko described in the subsequent section.

%[wigner matrices]

\subsection{Stochastic Canonical Equations}

The most critical theorem for this work provides a method for finding deterministic equivalents of empirical spectral distribution functions for symmetric random matrices with independent entries, except for the relation that the lower triangular entries are determined by symmetry from the upper triangular entries.  Provided some regularity conditions \eqref{GirkoCond1}, \eqref{GirkoCond2}, and \eqref{GirkoCond3} hold, the Stieltjes transform \eqref{STEq} of a deterministic equivalent distribution function can be found by solving a system of equations \eqref{GirkoK01Eq} containing matrices \cite{VGir1}.  Theorem \ref{GirkoK01Thm} provides the formal statement below.  This result and a compilation of many other related results can be found in \cite{VGir1}.

\begin{theorem}[Girko's K1 Equation \cite{VGir1}]\label{GirkoK01Thm}

Consider a family of symmetric matrix valued random variables $W_N$ indexed by size $N$ such that $W_N$ is an $N\times N$ symmetric matrix in which the entries on the upper triangular region are independent.  That is, $\left\{\left(W_{N}\right)_{ij}|1\leq i\leq j\leq N\right\}$ are independent with $\left(W_{N}\right)_{ji}=\left(W_{N}\right)_{ij}$.  Let $W_N$ have expectation $B_N=\operatorname{E}\left[W_N\right]$ and centralization $H_N=W_N-E\left[W_N\right]$ such that the following three conditions hold.  Note that in order to avoid cumbersome indexing, the index $N$ will henceforth be omitted from most expressions involving $W_N$, $B_N$, and $H_N$.
\begin{gather}
\sup_{N}{\max_{i}{\sum_{j=1}^{N}{\left|B_{ij}\right|}}}<\infty \label{GirkoCond1} \\
\sup_{N}{\max_{i}{\sum_{j=1}^{N}{\operatorname{E}\left[H_{ij}^2\right]}}}<\infty \label{GirkoCond2}\\
\begin{aligned}
\lim_{N\rightarrow\infty}{\max_{i}{\sum_{j=1}^{N}{\operatorname{E}\left[H_{ij}^2\chi\left(\left|H_{ij}\right|>\tau\right)\right]}}}\\
=0\textrm{ for all }\tau>0
\end{aligned}\label{GirkoCond3}
\end{gather}
Then for almost all $x$,
\begin{equation}
\lim_{N\rightarrow\infty} \left|F_{W_N}\left(x\right)-F_N\left(x\right)\right|=0
\end{equation}
almost surely, where $F_N$ is the distribution with Stieltjes transform
\begin{equation}
\begin{aligned}\label{STEq}
S_{F_N}(z)
&=\int{\frac{1}{x-z}dF_{N}(x)}\\
&=\frac{1}{N}\sum_{k=1}^{N}C_{kk}(z),\quad \operatorname{Im}\left\{z\right\}\neq 0
\end{aligned}
\end{equation}
and the functions $C_{kk}\left(z\right)$ satisfy the canonical system of equations
\begingroup
\thinmuskip=\muexpr\thinmuskip*1/16\relax
\medmuskip=\muexpr\medmuskip*1/16\relax
\begin{equation}\label{GirkoK01Eq}
\begin{aligned}
	%C_{kk}\left(z\right)=&\left[\left(B-zI-\left(\delta_{lj}\sum_{s=1}^{N}{C_{ss}(z)\operatorname{E}\left[H_{js}^2\right]}\right)_{l,j=1}^{l,j=N}\right)^{-1}\right]_{kk}
	C_{kk}\left(z\right)=&\left[\left(B-zI-\vphantom{\left(\delta_{lj}\sum_{s=1}^{N}{C_{ss}(z)\operatorname{E}\left[H_{js}^2\right]}\right)_{l,j=1}^{l,j=N}}\right.\right.\ldots \\
	\ldots&\left.\left.\left(\delta_{lj}\sum_{s=1}^{N}{C_{ss}(z)\operatorname{E}\left[H_{js}^2\right]}\right)_{l,j=1}^{l,j=N}\right)^{-1}\right]_{kk}
\end{aligned}
\end{equation}
\endgroup
for $k=1,\ldots,N$.  Note that the notation $\left(\cdot\right)_{l,j=1}^{l,j=N}$ indicates a matrix built from the parameterized contents of the parentheses, such that $X=\left(X_{ij}\right)_{l,j=1}^{l,j=N}$, and $\delta_{lj}$ is the Kronecker delta function.  There exists a unique solution $C_{kk}(z)$ for $k=1,\ldots,N$ to the canonical system of equations \eqref{GirkoK01Eq} among the class $L=\left\{X(z)\in \mathbb{C} \mid X\left(z\right)\textrm{ analytic},~\operatorname{Im}\left\{z\right\}\operatorname{Im}\left\{X\left(z\right)\right\}>0\right\}$.  Furthermore, if
\begin{equation}
\inf_{i,j} {N\operatorname{E}\left[H_{ij}^2\right]}\geq c >0, \label{GirkoCond4}
\end{equation}
then
\begin{equation}
\lim_{N\rightarrow\infty} {\sup_{x}{\left|F_{W_N}\left(x\right)-F_N\left(x\right)\right|}}=0
\end{equation}
almost surely, where $F_N$ is defined as above.

\end{theorem}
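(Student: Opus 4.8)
The plan is to follow the resolvent / Stieltjes transform strategy that is standard in the stochastic canonical equation literature. First I would reduce to bounded entries: using the Lindeberg-type condition \eqref{GirkoCond3}, replace each $H_{ij}$ by its truncation $H_{ij}\chi(|H_{ij}|\le\tau_N)$ for a sequence $\tau_N\to 0$ chosen slowly enough that $\max_i\sum_j\operatorname{E}[H_{ij}^2\chi(|H_{ij}|>\tau_N)]\to 0$ still holds; by the rank and Hoffman--Wielandt inequalities the empirical spectral distribution and the candidate limit object change by $o(1)$, so one may assume the centered entries are uniformly bounded while keeping \eqref{GirkoCond1}--\eqref{GirkoCond2}. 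Next I would establish self-averaging: writing $S_N(z)=\tfrac1N\operatorname{tr}(W_N-zI)^{-1}$ and filtering on the rows of $W_N$ revealed one at a time, each martingale increment is bounded by $C/(N\,|\operatorname{Im} z|)$ via the rank-one interlacing inequality for resolvents, so Azuma--Hoeffding plus Borel--Cantelli give $S_N(z)-\operatorname{E}[S_N(z)]\to0$ almost surely, and the same argument applied to $F_{W_N}(x)$ gives $F_{W_N}(x)-\operatorname{E}[F_{W_N}(x)]\to0$. It then remains to show $\operatorname{E}[F_{W_N}(x)]-F_N(x)\to0$, which I would do at the level of Stieltjes transforms.

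For the self-consistent equation, let $R(z)=(W_N-zI)^{-1}$ and let $W^{(k)}$ be the principal submatrix obtained by deleting row and column $k$. The Schur complement formula gives
\begin{equation*}
R_{kk}(z)=\frac{1}{W_{kk}-z-\mathbf{w}_k^{\!\top}(W^{(k)}-zI)^{-1}\mathbf{w}_k},
\end{equation*}
where $\mathbf{w}_k$ is column $k$ of $W$ with the $k$th entry deleted. Splitting $\mathbf{w}_k$ into its mean $\mathbf{b}_k$ and centered part, independence of the centered entries from $W^{(k)}$ together with a Hanson--Wright / trace-variance concentration estimate for quadratic forms (legitimate now that entries are bounded) shows that this quadratic form concentrates around $\mathbf{b}_k^{\!\top}(W^{(k)}-zI)^{-1}\mathbf{b}_k+\sum_j\operatorname{E}[H_{kj}^2]\,R_{jj}(z)$ up to $o(1)$, after also comparing $(W^{(k)}-zI)^{-1}$ with $(W-zI)^{-1}$ by a rank-one perturbation bound. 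Feeding this back and organizing the mean contributions into the matrix $B-zI$ and the variance contributions into the diagonal matrix $\big(\delta_{lj}\sum_s C_{ss}(z)\operatorname{E}[H_{js}^2]\big)_{l,j}$, one obtains that $\big(\operatorname{E}[R_{kk}(z)]\big)_k$ satisfies \eqref{GirkoK01Eq} up to a perturbation tending to $0$; a quantitative stability argument for this fixed-point system (using $\|R\|\le 1/|\operatorname{Im} z|$ and that the linearized map is a strict contraction once $|\operatorname{Im} z|$ is bounded below) then yields $\operatorname{E}[R_{kk}(z)]\to C_{kk}(z)$, hence $\operatorname{E}[S_N(z)]-S_{F_N}(z)\to0$, and Stieltjes inversion gives the pointwise almost-sure convergence of the distribution functions for almost every $x$. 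Uniqueness within the class $L$ of Nevanlinna functions follows from the same contraction property: two solutions $C,\widetilde C\in L$ satisfy a fixed-point relation whose defining map is Lipschitz with constant strictly below $1$ on $\{|\operatorname{Im} z|>\text{const}\}$, forcing $C=\widetilde C$ there and then on the whole half-plane by analytic continuation.

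Finally, under the uniform lower bound \eqref{GirkoCond4}, I would upgrade pointwise to uniform convergence: the bound $\inf_{i,j}N\operatorname{E}[H_{ij}^2]\ge c>0$ keeps $\operatorname{Im} S_{F_N}(z)$ from collapsing as $\operatorname{Im} z\to0$, which yields a uniform H\"older modulus-of-continuity bound on $F_N$ and on $\operatorname{E}[F_{W_N}]$, so the family is equicontinuous; combined with the established pointwise almost-sure convergence and a P\'olya-type argument (pointwise convergence of monotone functions to a uniformly continuous limit is uniform), this gives $\sup_x|F_{W_N}(x)-F_N(x)|\to0$ almost surely. I expect the main obstacle to be the self-consistent-equation step: controlling the quadratic-form fluctuations and the cumulative error from the $N$ Schur-complement substitutions \emph{uniformly} in $k$, and propagating them through the nonlinear fixed-point map without circularity, since $R_{jj}$ appears on both sides; making the stability estimate for \eqref{GirkoK01Eq} quantitative enough to close this loop is the technical heart of the argument.
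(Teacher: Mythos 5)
This statement is not proven in the paper at all: Theorem~\ref{GirkoK01Thm} is imported verbatim from Girko's monograph \cite{VGir1} as a background tool, and the paper's own proofs (of Theorem~\ref{MainThm}, Corollary~\ref{MainCor}, and Theorem~\ref{NormalizedThm}) all \emph{use} it rather than establish it. So there is no in-paper argument to compare yours against; what follows assesses your sketch on its own terms.

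Your outline follows the standard resolvent programme (truncation via the Lindeberg condition, martingale-difference self-averaging with Azuma--Hoeffding and Borel--Cantelli, Schur-complement self-consistency, stability of the fixed point, and a tightness/equicontinuity upgrade under \eqref{GirkoCond4}), and each of those ingredients is the right kind of tool. The one step that does not go through as written is the passage from the scalar Schur-complement identities to the \emph{matrix} form of the canonical system \eqref{GirkoK01Eq}. The Schur complement gives
$R_{kk}(z)=\bigl(W_{kk}-z-\mathbf{w}_k^{\top}(W^{(k)}-zI)^{-1}\mathbf{w}_k\bigr)^{-1}$,
and after concentration the variance part of the quadratic form indeed produces the diagonal correction $\sum_j\operatorname{E}[H_{kj}^2]C_{jj}(z)$; but the mean part produces the scalar $\mathbf{b}_k^{\top}(W^{(k)}-zI)^{-1}\mathbf{b}_k$ sitting in a denominator, whereas \eqref{GirkoK01Eq} requires $B$ to appear as a full non-diagonal matrix \emph{inside} the inverse, i.e.\ $C_{kk}(z)=[(B-zI-D(z))^{-1}]_{kk}$. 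When $B$ is not diagonal (which is exactly the case of interest in this paper, since $B=\tfrac{p}{\gamma}A(\mathcal{G}_{\mathrm{lat}})$), these two formulations are not related by a one-line rearrangement: the quadratic form $\mathbf{b}_k^{\top}R^{(k)}\mathbf{b}_k$ involves off-diagonal resolvent entries of $W^{(k)}$ that your scheme never controls, and feeding only the diagonal entries $R_{jj}$ back into the fixed point does not close the system. The usual way around this is to compare $R=(W-zI)^{-1}$ directly with the deterministic resolvent $G=(B-zI-D(z))^{-1}$ via the identity $R-G=-R\,(H+D(z))\,G$ and show $\operatorname{E}[R(H+D)G]\to0$ entrywise using the independence structure, rather than eliminating rows one at a time. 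Relatedly, your uniqueness argument by contraction only works for $|\operatorname{Im}z|$ large, and extending it to all of $\mathbb{C}\setminus\mathbb{R}$ by ``analytic continuation'' presumes the two solutions are already known to be analytic continuations of functions agreeing on an open set of the \emph{same} connected domain, which requires the Nevanlinna (class $L$) structure more carefully than you invoke it. These are repairable, but as stated they are the gaps.
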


In addition to the independence properties of the entries of the random matrix, Theorem \ref{GirkoK01Thm} specifies three conditions that must be verified.  The first condition \eqref{GirkoCond1} bounds the absolute sum of the entries of each row, and the second condition \eqref{GirkoCond2} bounds the total variance of entries of each row.  The third condition \eqref{GirkoCond3} is closely related to Lindberg's condition of the central limit theorem.  Under these conditions, a deterministic equivalent $F_N(x)$ for $F_{W_N}(x)$ can be found by solving the system of equations \eqref{GirkoK01Eq} and computing the Stieltjes transform via \eqref{STEq}, which can then be inverted.  Under the additional condition \eqref{GirkoCond4}, the supremum converges~\cite{VGir1}.

\section{Main Results}\label{SecMainResults}

This section examines, in particular, a random graph model formed by percolation of a $D$-dimensional lattice graph.  Note that there are many types of graphs known as lattice graphs in other contexts, so the precise definition used will be introduced here.  A \definition{$\mathbf{D}$-dimensional lattice graph} with size $M_d$ along the $d$th dimension is a graph $\mathcal{G}_{\mathrm{lat}}=\left(\mathcal{V},\mathcal{E}\right)$ in which the $\left|\mathcal{V}\right|=N=\prod_{d=1}^{D}{M_d}$ nodes are identified with the ordered $D$-tuples in $\mathbb{Z}\left[1,M_1\right]\times\ldots\times\mathbb{Z}\left[1,M_D\right]$ and in which two nodes are connected by a link if the corresponding $D$-tuples differ by exactly one symbol \cite{RLas1}.  In this way, if the node indices are fixed along $D-1$ of the lattice dimensions, the node-induced subgraph along the remaining dimension is a complete graph.  Denoting the adjacency matrix of a complete graph on $M$ vertices by $K_M$, in terms of Kronecker tensor products and adjacency matrices, these lattice graphs can be described by
\begingroup
\thinmuskip=\muexpr\thinmuskip*1/16\relax
\medmuskip=\muexpr\medmuskip*1/16\relax
\begin{equation}
A\left(\mathcal{G}_{\mathrm{lat}}\right)=\sum_{j=1}^{D}{\bigotimes_{d=1}^{D}{X_{dj}}}, \enskip X_{dj}=\left\{\begin{array}{ll} K_{M_d} & j=d \\ {\phantom{K}\mllap{I}}_{M_d} & j\neq d  \end{array}\right.
\end{equation}
\endgroup
where the $j$th term in the summation contributes all complete graphs along the $j$th lattice dimension.  Such graphs are of interest because they can be grown to a large number of nodes by increasing the lattice dimension sizes while the number of distinct eigenvalues of the adjacency matrix does not increase.  The eigenvalues of the lattice graph adjacency matrix are
\begin{equation}
\begin{aligned}
\lambda\left(j_1,\ldots,j_D\right)&=\sum_{d=1}^D\lambda_d\left(j_d\right), \\ \lambda_d(j_d)&=\left\{\begin{array}{ll} M_d-1 & j_d=0 \\ \phantom{M_d-1}\mathllap{-1} & j_d=1\end{array}\right.
\end{aligned}
\end{equation}
for $j_1,\ldots,j_D=0,1$.  Thus, the number of distinct eigenvalues of a $D$-dimensional lattice graph depends only on the number of dimensions $D$ and is at most $2^D$.

The adjacency matrices of random graphs distributed according to percolation models $\mathcal{G}_{\mathrm{perc}}\left(\mathcal{G}_{\mathrm{lat}},p\right)$ of $D$-dimensional lattices are symmetric matrices with independent entries, except for relations determined by symmetry.  In more precise terms, the matrix entries $A_{ij}(\mathcal{G}_{\mathrm{perc}})$ for $i\leq j$ are independent, and $A_{ji}(\mathcal{G}_{\mathrm{perc}})=A_{ij}(\mathcal{G}_{\mathrm{perc}})$.  Thus, the scaled adjacency matrix eigenvalues of percolations models formed from these lattices can be analyzed by the tools provided in Theorem \ref{GirkoK01Thm}.  For a lattice with arbitrary number of dimensions $D$ and sizes $M_d$ for $d=1,\ldots,D$ along with an arbitrary link inclusion probability $p$, Theorem \ref{MainThm} derives the form of the unique solution $C_{kk}\left(z\right)$ for $k=1,\ldots, N$ to the system of equations \eqref{GirkoK01Eq}.  An outline of the methods used can be found in the initial paragraph of the proof.  This result will subsequently be used in Corollary \ref{MainCor} to compute the Stieltjes transform of a deterministically equivalent distribution function for the empirical spectral distribution.

\begin{theorem}[Solution Form for $D$-Lattice Percolation]\label{MainThm}

Consider the $D$-dimensional lattice graph $\mathcal{G}_{\mathrm{lat}}$ with $N$ nodes in which the $d$th dimension of the lattice had size $M_d$ for $d=1,\ldots, D$ such that the adjacency matrix is
\begingroup
\thinmuskip=\muexpr\thinmuskip*1/8\relax
\medmuskip=\muexpr\medmuskip*1/8\relax
\begin{gather}
A\left(\mathcal{G}_{\mathrm{lat}}\right)=\sum_{j=1}^{D}{\bigotimes_{d=1}^{D}{X_{dj}}}, \enskip X_{dj}=\left\{\begin{array}{ll} K_{M_d} & j=d \\ {\phantom{K}\mllap{I}}_{M_d} & j\neq d  \end{array}\right. \\ N=\prod_{d=1}^{D}{M_d}.
\end{gather}
\endgroup
Form a random graph $\mathcal{G}_{\mathrm{perc}}\left(\mathcal{G}_{\mathrm{lat}},p\right)$ by independently including each link of $\mathcal{G}_{\mathrm{lat}}$ with probability $p$, and denote the corresponding random scaled adjacency matrix $W$, expectation $B$, and centralization $H$
\begin{gather}
W\left(\mathcal{G}_{\mathrm{perc}}\right)=\frac{1}{\gamma}A\left(\mathcal{G}_{\mathrm{perc}}\right)\\
B=\operatorname{E}\left[W\left(\mathcal{G}_{\mathrm{perc}}\right)\right]\\
H\left(\mathcal{G}_{\mathrm{perc}}\right)=W\left(\mathcal{G}_{\mathrm{perc}}\right)-\operatorname{E}\left[W\left(\mathcal{G}_{\mathrm{perc}}\right)\right]
\end{gather}
where
\begin{equation}
\gamma=\gamma\left(M_1,\ldots,M_D\right)=\sum_{d=1}^{D}\left(M_d-1\right).
\end{equation}
Let $C_{kk}\left(z\right)$ for $k=1,\ldots,N$ be the unique solution to the system of equations \eqref{GirkoK01Eq} among the class $L$ guaranteed to exist by Theorem \ref{GirkoK01Thm}, and write
\begin{equation}\label{GirkoK01Sys2}
\begin{aligned}
%C\left(z\right)=\left(B-zI_N-\left(\delta_{lj}\sum_{s=1}^{N}{C_{ss}\left(z\right)\operatorname{E}\left[H_{js}^{2}\right]}\right)_{l,j=1}^{l,j=N}\right)^{-1}.
C\left(z\right)=&\left(B-zI_N-\ldots\vphantom{\left(\delta_{lj}\sum_{s=1}^{N}{C_{ss}\left(z\right)\operatorname{E}\left[H_{js}^{2}\right]}\right)_{l,j=1}^{l,j=N}}\right. \\
\ldots&\left.\left(\delta_{lj}\sum_{s=1}^{N}{C_{ss}\left(z\right)\operatorname{E}\left[H_{js}^{2}\right]}\right)_{l,j=1}^{l,j=N}\right)^{-1}.
\end{aligned}
\end{equation}
Note that $C_{kk}\left(z\right)$ is the $k$th diagonal entry of $C\left(z\right)$ and that uniqueness of $C_{kk}\left(z\right)$ implies uniqueness of $C\left(z\right)$.  For some values of $\alpha_{i_1,\ldots,i_D}\left(z\right)$ for $i_1,\ldots,i_D=0,1$
\begin{equation}
\begin{gathered}
C\left(z\right)=\sum_{\mathclap{i_1,\ldots,i_D=0}}^{1}{ \alpha_{i_1,\ldots,i_D}\left(z\right)\bigotimes_{d=1}^{D}{Y_{di_d}}}, \\ Y_{di_d}=\left\{\begin{array}{ll} {\phantom{K}\mllap{K}}_{M_d} & i_d=0 \\{\phantom{K}\mllap{I}}_{M_d} & i_d=1  \end{array}\right..
\end{gathered}\label{SolutionForm}
\end{equation}

\end{theorem}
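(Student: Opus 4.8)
The plan is to exploit the symmetry of the percolation model under independent permutations of the symbols within each lattice dimension, and to show that this symmetry forces the solution $C(z)$ into the claimed tensor form. First I would observe that the supergraph $\mathcal{G}_{\mathrm{lat}}$, and hence the distribution of $\mathcal{G}_{\mathrm{perc}}(\mathcal{G}_{\mathrm{lat}},p)$, is invariant under the group $G = S_{M_1}\times\cdots\times S_{M_D}$ acting on nodes by permuting coordinates independently in each dimension; concretely, for a permutation matrix $Q = \bigotimes_{d=1}^D Q_d$ with $Q_d$ the permutation matrix of $\sigma_d\in S_{M_d}$, the matrices $B$ and the variance profile $(\operatorname{E}[H_{js}^2])_{j,s}$ satisfy $QBQ^{\mathsf T}=B$ and the diagonal matrix $\Gamma(C) := (\delta_{lj}\sum_s C_{ss}(z)\operatorname{E}[H_{js}^2])_{l,j}$ is likewise conjugation-invariant once we know $C_{ss}(z)$ depends on $s$ only through which "orbit class" $s$ lies in. This last point is itself a consequence of symmetry: by uniqueness of the solution in the class $L$, applying the conjugation $Q(\cdot)Q^{\mathsf T}$ to equation \eqref{GirkoK01Eq} sends the solution to another solution in $L$, so it must be fixed, giving $C(z) = QC(z)Q^{\mathsf T}$ for every $Q\in G$. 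Since $G$ acts transitively on all $N$ nodes, this immediately yields $C_{kk}(z)=C_{11}(z)$ for all $k$, which makes $\Gamma(C)$ a scalar multiple of $I_N$ (using that every row of the variance profile has the same sum, by vertex-transitivity of $\mathcal{G}_{\mathrm{lat}}$).

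Next I would identify the commutant of the representation of $G$ on $\mathbb{C}^N=\bigotimes_d \mathbb{C}^{M_d}$. The algebra of $M_{M_d}(\mathbb C)$-matrices commuting with all of $S_{M_d}$ is two-dimensional, spanned by $I_{M_d}$ and the all-ones matrix $J_{M_d}$; equivalently, spanned by $I_{M_d}$ and $K_{M_d}=J_{M_d}-I_{M_d}$. Hence the commutant of $G$ on the full tensor space is spanned exactly by the $2^D$ products $\bigotimes_{d=1}^D Y_{di_d}$ with $Y_{di_d}\in\{K_{M_d},I_{M_d}\}$, which is precisely the spanning set appearing in \eqref{SolutionForm}. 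Since $C(z)$ commutes with $G$, it lies in this span, so $C(z)=\sum_{i_1,\ldots,i_D=0}^1 \alpha_{i_1,\ldots,i_D}(z)\bigotimes_{d=1}^D Y_{di_d}$ for suitable scalars $\alpha_{i_1,\ldots,i_D}(z)$, which is the assertion of the theorem.

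The two steps requiring genuine care are the symmetry-and-uniqueness argument and the verification that $B-zI_N-\Gamma(C)$ actually lies in this commutant algebra so that its inverse does too. For the former I would spell out that $Q(\cdot)Q^{\mathsf T}$ preserves analyticity and the sign condition $\operatorname{Im}\{z\}\operatorname{Im}\{X(z)\}>0$ entrywise on the diagonal — this needs the observation that conjugation by a permutation matrix only reindexes the diagonal entries, so the class $L$-membership is manifestly preserved — and then invoke the uniqueness clause of Theorem \ref{GirkoK01Thm} verbatim. For the latter, I would check that $B$ itself is a linear combination of the $\bigotimes_d Y_{di_d}$: indeed $B=\frac{p}{\gamma}A(\mathcal{G}_{\mathrm{lat}})=\frac{p}{\gamma}\sum_{j=1}^D\bigotimes_{d=1}^D X_{dj}$ with $X_{dj}\in\{K_{M_d},I_{M_d}\}$, which is already in the required form; that $zI_N$ and the scalar $\Gamma(C)=c(z)I_N$ are trivially in it; and that the algebra spanned by $\{\bigotimes_d Y_{di_d}\}$ is closed under multiplication (because $K_M^2 = (M-2)K_M+(M-1)I_M$ keeps products inside the $\operatorname{span}\{I_M,K_M\}$ for each factor) and under inversion of invertible elements (a finite-dimensional unital algebra over $\mathbb C$ contains inverses of its invertible elements). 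This closure is the main obstacle in the sense that it is the one place where the specific structure $Y_{di_d}\in\{K_{M_d},I_{M_d}\}$ — rather than some larger or incompatible set — has to be pinned down; everything else is bookkeeping with Kronecker products. I expect the author's proof to follow essentially this route, perhaps phrased directly in terms of an explicit eigenbasis (the Kronecker product of the complete-graph eigenvectors) rather than the representation-theoretic commutant language, but the content is the same: simultaneous block-diagonalization of $B$ and the variance structure reduces \eqref{GirkoK01Eq} to a system in the $2^D$ unknowns $\alpha_{i_1,\ldots,i_D}(z)$.
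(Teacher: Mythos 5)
Your proposal is correct and follows essentially the same route as the paper's proof: conjugating the canonical system by $P = P_1\otimes\cdots\otimes P_D$ and invoking the uniqueness clause of Theorem \ref{GirkoK01Thm} forces $C(z)=P\,C(z)\,P^{\top}$, and the paper establishes that this invariance is equivalent to the form \eqref{SolutionForm} by an explicit entrywise permutation argument rather than your commutant language, but the content is identical. The only piece you omit is the paper's opening verification of conditions \eqref{GirkoCond1}--\eqref{GirkoCond3}, which must be checked before the existence/uniqueness guarantee of Theorem \ref{GirkoK01Thm} can be invoked; for this model it is routine bookkeeping with the variance profile $\operatorname{E}[H_{ij}^2]=p(1-p)/\gamma^2$ on the $\gamma$ supergraph neighbors of each node.
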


\begin{proof}

The theorem is proven primarily by use of symmetry in the random graph model and is organized in three parts below.  The first paragraph verifies that the three conditions of Theorem \ref{GirkoK01Thm}  hold for the random matrix model $W\left(\mathcal{G}_{\mathrm{perc}}\right)$, guaranteeing existence of a unique solution $C_{kk}\left(z\right)$ for $k=1,\ldots N$ to \eqref{GirkoK01Eq}.  Subsequently, the second paragraph demonstrates that $C\left(z\right)$ is of the desired form if and only if it is invariant to permutations that factor into Kronecker products of smaller permutations along each lattice dimension.  Finally, the third paragraph uses symmetries in the random graph model to show that $C\left(z\right)$, is, in fact, invariant under these  permutations, yielding the result in the theorem.

In order to simplify the calculations used to verify the conditions of Theorem \ref{GirkoK01Thm}, a more explicit characterization of the entries $A_{ij}$ is first introduced.  Note that every integer $1\leq x \leq N$ can be uniquely represented in a mixed-radix system as
\begin{equation}
x=1+\sum_{d=1}^{D}{\beta\left(x,d\right)\left(\prod_{j=1}^{d-1}{M_j}\right)}
\end{equation}
for integers $0\leq \beta\left(x,d\right)\leq M_d-1$.  Hence, node indices $(i,j)$ can be described by their digit sequences $\beta\left(i\right)=\left\{\beta\left(i,d\right)\right\}_{d=1}^{D}$ and $\beta\left(j\right)=\left\{\beta\left(j,d\right)\right\}_{d=1}^{D}$ in this system.  Let $\left\|\beta\left(i\right)-\beta\left(j\right)\right\|_0$ count the differences between these digit sequences.  Using this to describe the entries of the supergraph adjacency matrix yields
\begin{equation}
A_{ij}\left(\mathcal{G}_{\mathrm{lat}}\right)=\left\{\begin{array}{cc} 1 & \left\|\beta\left(i\right)-\beta\left(j\right)\right\|_0=1 \\ 0 & \textrm{otherwise} \end{array}\right..
\end{equation}
Because $B_{ij}=\frac{p}{\gamma}A_{ij}\left(\mathcal{G}_{\mathrm{lat}}\right)$, the following computation verifies the first condition \eqref{GirkoCond1} of Theorem \ref{GirkoK01Thm}.
\begingroup
\begin{equation}
\begin{aligned}
\sup_{N}\ \max_{i}\sum_{j=1}^{N}{\left|B_{ij}\right|}
&=\sup_{N}{\ \max_{i}{\sum_{\left\|\beta\left(i\right)-\beta\left(j\right)\right\|_0=1}{\hspace{-1em}\left|B_{ij}\right|}}}\\
&=\sup_{N}{\frac{p}{\gamma}\sum_{d=1}^{D}\left(M_d-1\right)}\\
&=p<\infty
\end{aligned}\label{Verify_Cond1}
\end{equation}
\endgroup
Noting that $\operatorname{E}\left[\right(H_{ij}\left)_{xy}^2\right]=\frac{p\left(1-p\right)}{\gamma^2}$ if $\left(A_{ij}\left(\mathcal{G}_{\mathrm{lat}}\right)\right)_{xy}=1$ and that $\operatorname{E}\left[\right(H_{ij}\left)_{xy}^2\right]=0$ if $\left(A_{ij}\left(\mathcal{G}_{\mathrm{lat}}\right)\right)_{xy}=0$, the following computation verifies the second condition \eqref{GirkoCond2} of Theorem \ref{GirkoK01Thm}.
\begin{equation}
\begin{aligned}
\sup_{N}\max_{i}\sum_{j=1}^{N}{\operatorname{E}\left[H_{ij}^2\right]}
&=\sup_{N}\max_{i}{\hspace{-1em}\sum_{\left\|\beta\left(i\right)-\beta\left(j\right)\right\|_0=1}{\hspace{-2em}\operatorname{E}\left[H_{ij}^2\right]}}\\
&\hspace{-3em}=\sup_{N}\frac{p\left(1-p\right)}{\gamma^2}\left(\sum_{d=1}^{D}\left(M_d-1\right)\right)\\
&\hspace{-3em}\leq p\left(1-p\right)<\infty
\end{aligned}\label{Verify_Cond2}
\end{equation}
Finally, the following computation demonstrates that the third condition \eqref{GirkoCond3} of Theorem \ref{GirkoK01Thm} holds for any $\tau>0$.
\begin{equation}
\begin{aligned}
&\lim_{N\rightarrow\infty}{\max_{i}{\sum_{j=1}^{N}{\operatorname{E}\left[H_{ij}^2\chi\left(\left|H_{ij}\right|>\tau\right)\right]}}}\leq \\
&\lim_{N\rightarrow\infty}{\max_{i}{\hspace{-.8em}\sum_{\left\|\beta\left(i\right)-\beta\left(j\right)\right\|_0=1}{\hspace{-1em}\operatorname{E}\left[\frac{\max\left(p^2,\left(1-p\right)^2\right)}{\gamma^2}\ldots\right.}}}\\
&\hspace{5.25em}{\left.\ldots\chi\left(\vphantom{\frac{\max\left(p^2,\left(1-p\right)^2\right)}{\gamma^2}}\frac{\max\left(p,1-p\right)}{\gamma}>\tau\right)\right]}=0
\end{aligned}\label{Verify_Cond3}
\end{equation}
This is true because
\begin{equation}
\lim_{N\rightarrow\infty}\frac{\max\left(p,1-p\right)}{\gamma}=0,
\end{equation}
so when $\gamma$ becomes sufficiently large
\begin{equation}
\lim_{N\rightarrow\infty}\chi\left(\frac{\max\left(p,1-p\right)}{\gamma}>\tau\right)=0.
\end{equation}
Note that because $H_{ij}=0$ when $\left\|\beta\left(i\right)-\beta\left(j\right)\right\|_0\neq 1$ the additional condition \eqref{GirkoCond4} for the stronger conclusion of Theorem \ref{GirkoK01Thm} does not hold.

Given that $C(z)$ is of the form in \eqref{SolutionForm}, let $\phi_d$ be a permutation function on $M_d$ symbols with corresponding row permutation matrix $P_d$ for $d=1,\ldots,D$ and $P=P_1 \otimes \ldots \otimes P_D$ with corresponding permutation function $\phi$.  It is clear that
\begingroup
\thinmuskip=\muexpr\thinmuskip*1/16\relax
\medmuskip=\muexpr\medmuskip*1/8\relax
%\begin{equation}
\begin{align}
\hspace{-.5em}P^{\vphantom{\top}}C\left(z\right)P^{\top}
&=P^{\vphantom{\top}}\left(\sum_{{i_1,\ldots,i_D=0}}^{1}{\hspace{-1em}\alpha_{i_1,\ldots,i_D}\left(z\right)\bigotimes_{d=1}^{D}{Y_{di_d}}}\right)P^{\top}\nonumber\\
&=\phantom{P^{\vphantom{\top}}+}\sum_{{i_1,\ldots,i_D=0}}^{1}{\hspace{-1em}\alpha_{i_1,\ldots,i_D}\left(z\right)\bigotimes_{d=1}^{D}{P_d^{\vphantom{\top}}Y_{di_d}P_d^{\top}}}\\
&=\phantom{P^{\vphantom{\top}}+}\sum_{{i_1,\ldots,i_D=0}}^{1}{\hspace{-1em}\alpha_{i_1,\ldots,i_D}\left(z\right)\bigotimes_{d=1}^{D}{Y_{di_d}}}=C(z).\nonumber
\end{align}
%\end{equation}
\endgroup
Note, again, that every integer $1\leq x\leq N$ can be uniquely represented as
\begin{equation}
x=1+\sum_{d=1}^D{\beta\left(x,d\right)\left(\prod_{j=1}^{d-1}M_j\right)}
\end{equation}
for integers $0\leq \beta\left(x,d\right)< M_d$ and that
\begingroup
\thinmuskip=\muexpr\thinmuskip*1/2\relax
\medmuskip=\muexpr\medmuskip*1/2\relax
\begin{equation}
\phi\left(x\right)=1+\sum_{d=1}^D{\left(\phi_d\left(\beta\left(x,d\right)+1\right)-1\right)\left(\prod_{j=1}^{d-1}M_j\right)}.
\end{equation}
\endgroup
If $C(z)$ is of the form in \eqref{SolutionForm} then
$
C(z)_{xy}=\alpha_{\delta_{\beta\left(x,1\right)\beta\left(y,1\right)},\ldots,\delta_{\beta\left(x,D\right)\beta\left(y,D\right)}}\left(z\right).
$
If $C(z)$ is not of the form in \eqref{SolutionForm} then there are some $x_1,y_1$ and $x_2,y_2$ such that $\delta_{\beta\left(x_1,d\right)\beta\left(y_1,d\right)}=\delta_{\beta\left(x_2,d\right)\beta\left(y_2,d\right)}$ for $d=1,\ldots,D$ but that $C(z)_{x_1,y_1}\neq C(z)_{x_2,y_2}$.  Let $\psi_1,\ldots,\psi_D$ be permutations such that $\psi_d\left(\beta\left(x_1,d\right)+1\right)-1=\beta\left(x_2,d\right)$ and $\psi_d\left(\beta\left(y_1,d\right)+1\right)-1=\beta\left(y_2,d\right)$ with corresponding row permutation matrices $Q_d$.  Let $Q=Q_1\otimes\ldots Q_D$ be the row permutation matrix corresponding to permutation $\psi$.  Then
\begingroup
%\thinmuskip=\muexpr\thinmuskip*1/8\relax
%\medmuskip=\muexpr\medmuskip*1/8\relax
\begin{equation}
\begin{aligned}
\left(Q^{\vphantom{\top}}C\left(z\right)Q^{\top}\right)_{x_1y_1}&=C\left(z\right)_{\psi\left(x_1\right)\psi\left(y_1\right)}\\
&=C\left(z\right)_{x_2y_2}\neq C\left(z\right)_{x_1y_1}.
\end{aligned}
\end{equation}
\endgroup
Hence, $C\left(z\right)$ is of the form in \eqref{SolutionForm} if and only if $C\left(z\right)=P^{\vphantom{\top}}C\left(z\right)P^{\top}$ for all $P=P_1\otimes\ldots\otimes P_D$ where $P_d$ is a row permutation matrix on $M_d$ symbols.

Let $\phi_d$ be a permutation function on $M_d$ symbols with corresponding row permutation matrix $P_d$ for $d=1,\ldots,D$.  Also, let $P=P_1 \otimes \ldots \otimes P_D$, thus forming a permutation matrix that operates on each lattice dimension.  The following equations result from permuting the rows and columns of $C\left(z\right)$ and by manipulating the matrix inverse, expectations, and transposes.
\begingroup
\thinmuskip=\muexpr\thinmuskip*1/8\relax
\medmuskip=\muexpr\medmuskip*1/8\relax
\begin{equation}
\begin{aligned}
P^{\vphantom{\top}}C\left(z\right)P^{\top}
&=P^{\vphantom{\top}}\left(B-zI_N-\ldots\vphantom{\phantom{P^{\vphantom{\top}}}\left(\delta_{lj}\sum_{s=1}^{N}{C_{ss}\left(z\right)\operatorname{E}\left[H_{js}^{2}\right]}\right)_{l,j=1}^{l,j=N}\phantom{P^{\top}}}\right.\\
&\hspace{-2em}\left.\ldots\left(\delta_{lj}\sum_{s=1}^{N}{C_{ss}\left(z\right)\operatorname{E}\left[H_{js}^{2}\right]}\right)_{l,j=1}^{l,j=N}\right)^{-1}\mathclap{P^{\top}}\\
&=\phantom{P^{\vphantom{\top}}}\left(P^{\vphantom{\top}}BP^{\top}-zP^{\vphantom{\top}}I_NP^{\top}-\ldots\vphantom{\left(\delta_{lj}\sum_{s=1}^{N}{C_{ss}\left(z\right)\operatorname{E}\left[H_{\phi\left(j\right)s}^{2}\right]}\right)_{l,j=1}^{l,j=N}}\right.\\
&\hspace{-2em}\ldots\left.\phantom{P^{\vphantom{\top}}}\left(\delta_{lj}\sum_{s=1}^{N}{C_{ss}\left(z\right)\operatorname{E}\left[H_{\phi\left(j\right)s}^{2}\right]}\right)_{l,j=1}^{l,j=N}\right)^{-1}
\end{aligned}
\label{PermuteC}
\end{equation}
\endgroup
Note that for the underlying lattice graph, $P^{\vphantom{\top}}A\left(\mathcal{G}_{\mathrm{lat}}\right)P^{\top}=A\left(\mathcal{G}_{\mathrm{lat}}\right)$.  Hence $W\left(\mathcal{G}_{\mathrm{perc}}\right)$ has the same distribution as $P^{\vphantom{\top}}W\left(\mathcal{G}_{\mathrm{perc}}\right)P^{\top}$ as all edge inclusions are independent and identically distributed.  Therefore,
\begin{equation}
B=P^{\vphantom{\top}}BP^{\top}.\label{Symmetry1}
\end{equation}
Furthermore, in terms of matrix entries, $W_{\phi\left(x\right)\phi\left(y\right)}\left(\mathcal{G}_{\mathrm{perc}}\right)$ has the same distribution as $W_{xy}\left(\mathcal{G}_{\mathrm{perc}}\right)$.  Hence, $H_{\phi\left(j\right)s}$ has the same distribution as $H_{j\phi^{-1}\left(s\right)}$.  By applying these two identities and changing the order of summation by $\phi$, the following equations result.
\begingroup
\thinmuskip=\muexpr\thinmuskip*1/8\relax
\medmuskip=\muexpr\medmuskip*1/8\relax
\begin{equation}
\begin{aligned}
P^{\vphantom{\top}}C\left(z\right)P^{\top}&=\left(B-zI_N-\ldots\vphantom{\left(\delta_{lj}\sum_{s=1}^{N}{C_{ss}\left(z\right)\operatorname{E}\left[H_{j\phi^{-1}\left(s\right)}^{2}\right]}\right)_{l,j=1}^{l,j=N}}\right.\\
&\hspace{-2em}\ldots\left.\left(\delta_{lj}\sum_{s=1}^{N}{C_{ss}\left(z\right)\operatorname{E}\left[H_{j\phi^{-1}\left(s\right)}^{2}\right]}\right)_{l,j=1}^{l,j=N}\right)^{-1}\\
&=\left(B-zI_N-\ldots\vphantom{\left(\delta_{lj}\sum_{s=1}^{N}{C_{\phi\left(s\right)\phi\left(s\right)}\left(z\right)\operatorname{E}\left[H_{js}^{2}\right]}\right)_{l,j=1}^{l,j=N}}\right.\\
&\hspace{-2em}\ldots\left.\left(\delta_{lj}\sum_{s=1}^{N}{C_{\phi\left(s\right)\phi\left(s\right)}\left(z\right)\operatorname{E}\left[H_{js}^{2}\right]}\right)_{l,j=1}^{l,j=N}\right)^{-1}
\end{aligned}
\end{equation}
\endgroup
The entries $C_{\phi\left(k\right)\phi\left(k\right)}\left(z\right)$ are the $k$th diagonal entries of $P^{\vphantom{\top}}C\left(z\right)P^{\top}$ for $k=1,\ldots,N$ and, thus, a solution to the system of equations \eqref{GirkoK01Eq}.  However, because the solution is unique, this implies that
\begin{equation}
C\left(z\right)=P^{\vphantom{\top}}C\left(z\right)P^{\top}.
\end{equation}
Therefore, $C\left(z\right)$ is of the form shown in \eqref{SolutionForm}.\hfill $\blacksquare$

\end{proof}

While Theorm \ref{MainThm} specifies the form of the unique solution to the system of equations \eqref{GirkoK01Eq}, it does not describe how to find the $2^D$ parameters $\alpha_{i_1,\ldots,i_D}\left(z\right)$ that correspond to a given set of lattice dimensions and link probability parameter that are necessary to find the Stieltjes transform of the deterministically equivalent distribution function $F_N$.  Corollary \ref{MainCor} shows that the Stieltjes transform of $F_N$ is equal to the diagonal element of $C(z)$ described in Theorem \ref{MainThm}.  It also derives the system of $2^D$ nonlinear equations to which the parameters are a solution, accomplishing this by computing the terms of \eqref{GirkoK01Sys2} and noting that all computed matrices are simultaneously diagonalizable.  The matrix equation in \eqref{GirkoK01Sys2} then transforms into a system of equations that describes the eigenvalues, which appears in \eqref{CoeffSystem} of Corollary \ref{MainCor}.  Remark \ref{IterRmk} describes use of an iterative approach to find the solution.  For added clarity, please note that $i_1,\ldots,i_D=0,1$ index the parameters $\alpha_{i_1,\ldots,i_D}\left(z\right)$ while $j_1,\ldots,j_D=0,1$ index the equations that describe the parameters.

\begin{corollary}[Stieltjes Transform for $D$-Lattice Percolation]\label{MainCor}

The Stieltjes transform of the deterministic equivalent distribution function $F_n$ specified in Theorem \ref{GirkoK01Thm} for the empirical spectral distribution of $W\left(\mathcal{G}_{\mathrm{perc}}\right)$ is given by
\begin{equation}
S_{F_N}(z)=\alpha_{1,\ldots,1}\left(z\right), \quad \operatorname{Im}\left(z\right)\neq 0
\end{equation}
where the $2^D$ complex variables $\alpha_{i_1,\ldots,i_D}(z)$ for $i_1,\ldots,i_D=0,1$ solves the system
\begingroup
%\small
\thinmuskip=\muexpr\thinmuskip*1/32\relax
\medmuskip=\muexpr\medmuskip*1/32\relax
\begin{equation}
\begin{aligned}
%\pushleft{\hspace{1em}\sum_{\mathclap{i_1,\ldots,i_D=0}}^1{\alpha_{i_1,\ldots,i_D}(z)\prod_{d=1}^{D}{\lambda_{di_d}\left(j_d\right)}}=}\\
%\pushleft{\frac{1}{\frac{p}{\gamma}\left(\sum\limits_{d=1}^{D}\lambda_{d0}(j_d)\right)-z-\frac{p\left(1-p\right)}{\gamma^2}\left(\sum\limits_{d=1}^{D}{\left(M_d-1\right)}\right)\alpha_{1,\ldots,1}\left(z\right)}}
\hspace{1em}\sum_{\mathclap{i_1,\ldots,i_D=0}}^1\alpha_{i_1,\ldots,i_D}&(z)\prod_{d=1}^{D}{\lambda_{di_d}\left(j_d\right)}=\\
&\hspace{-3em}\left(\frac{p}{\gamma}\left(\sum\limits_{d=1}^{D}\lambda_{d0}(j_d)\right)-z-\ldots\right.\\
&\hspace{-3em}\left.\ldots\frac{p\left(1-p\right)}{\gamma^2}\left(\sum\limits_{d=1}^{D}{\left(M_d-1\right)}\right)\alpha_{1,\ldots,1}\left(z\right)\right)^{-1}
\end{aligned}
\label{CoeffSystem}
\end{equation}
\endgroup
of $2^D$ rational equations for $j_1,\ldots,j_D=0,1$ where
\begin{equation}
\lambda_{di_d}(j_d)=\left\{\begin{array}{ll} M_d-1 &  i_d=0,j_d=0 \\ \phantom{M_d-1}\mllap{-1} & i_d=0,j_d=1 \\ \phantom{M_d-1}\mllap{1} & i_d=1\phantom{j_d=0}\end{array}\right..
\end{equation}

\end{corollary}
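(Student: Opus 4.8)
The plan is to substitute the solution form \eqref{SolutionForm} from Theorem~\ref{MainThm} into the defining equation \eqref{GirkoK01Sys2} and then exploit the fact that every Kronecker-product matrix involved is diagonalized by one common orthogonal basis. The first step is to observe that the diagonal of $C(z)$ is constant: in $\bigotimes_{d=1}^{D} Y_{di_d}$, any factor with $i_d = 0$ equals $K_{M_d}$, which has zero diagonal, so the whole product has zero diagonal unless $i_1 = \cdots = i_D = 1$, in which case it is $I_N$. Hence $C_{kk}(z) = \alpha_{1,\ldots,1}(z)$ for every $k$, and \eqref{STEq} gives $S_{F_N}(z) = \frac{1}{N}\sum_{k=1}^{N} C_{kk}(z) = \alpha_{1,\ldots,1}(z)$, the first assertion. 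This also pins down the diagonal correction term in \eqref{GirkoK01Sys2}: the lattice graph is $\gamma$-regular with $\gamma = \sum_{d=1}^{D}(M_d - 1)$, and $\operatorname{E}[H_{js}^{2}] = \tfrac{p(1-p)}{\gamma^{2}}$ precisely on the edges incident to node $j$ and $0$ otherwise (the computation already made for \eqref{Verify_Cond2}), so $\sum_{s=1}^{N} C_{ss}(z)\operatorname{E}[H_{js}^{2}] = \tfrac{p(1-p)}{\gamma^{2}}\bigl({\textstyle\sum_{d=1}^{D}(M_d-1)}\bigr)\alpha_{1,\ldots,1}(z)$ independently of $j$. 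Writing $B = \tfrac{p}{\gamma}A(\mathcal{G}_{\mathrm{lat}}) = \tfrac{p}{\gamma}\sum_{j=1}^{D}\bigotimes_{d=1}^{D} X_{dj}$, the matrix inverted in \eqref{GirkoK01Sys2} is therefore
\[
M(z) \;=\; B \;-\; \Bigl(z + \tfrac{p(1-p)}{\gamma^{2}}\bigl({\textstyle\sum_{d=1}^{D}(M_d-1)}\bigr)\alpha_{1,\ldots,1}(z)\Bigr)\, I_N .
\]

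The second step is the simultaneous diagonalization. For each $d$, the matrices $K_{M_d}$ and $I_{M_d}$ share the orthogonal decomposition $\mathbb{R}^{M_d} = V_{d,0} \oplus V_{d,1}$, where $V_{d,0} = \operatorname{span}(\mathbf{1}_{M_d})$ is an eigenspace of $K_{M_d}$ for $M_d - 1$ and $V_{d,1} = (\mathbf{1}_{M_d})^{\perp}$ is one for $-1$, while $I_{M_d}$ acts as $1$ on both. Taking Kronecker products yields a common eigenspace decomposition $\mathbb{R}^{N} = \bigoplus_{(j_1,\ldots,j_D)\in\{0,1\}^{D}} \bigotimes_{d=1}^{D} V_{d,j_d}$ for $I_N$, for every $\bigotimes_d Y_{di_d}$, for every $\bigotimes_d X_{dj}$, and hence for $C(z)$ and for $M(z)$. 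On the block indexed by $(j_1,\ldots,j_D)$, the matrix $\bigotimes_d Y_{di_d}$ acts as the scalar $\prod_{d=1}^{D}\lambda_{di_d}(j_d)$ with $\lambda_{di_d}(j_d)$ exactly the three-case quantity defined in the corollary ($K_{M_d}$ contributes $M_d-1$ when $j_d=0$ and $-1$ when $j_d=1$, and $I_{M_d}$ contributes $1$ always), and $A(\mathcal{G}_{\mathrm{lat}})$ acts as $\sum_{d=1}^{D}\lambda_{d0}(j_d)$, in agreement with the lattice-graph eigenvalue formula recorded earlier. Consequently $C(z)$ acts on that block as $\sum_{i_1,\ldots,i_D=0}^{1}\alpha_{i_1,\ldots,i_D}(z)\prod_{d=1}^{D}\lambda_{di_d}(j_d)$, while $M(z)$ acts as $\tfrac{p}{\gamma}\sum_{d=1}^{D}\lambda_{d0}(j_d) - z - \tfrac{p(1-p)}{\gamma^{2}}\bigl({\textstyle\sum_{d=1}^{D}(M_d-1)}\bigr)\alpha_{1,\ldots,1}(z)$. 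Since $C(z) = M(z)^{-1}$ with $M(z)$ invertible (its inverse exists by Theorem~\ref{MainThm}), each of these block scalars of $M(z)$ is nonzero, and equating the block scalar of $C(z)$ with the reciprocal of the block scalar of $M(z)$ for each of the $2^{D}$ choices of $(j_1,\ldots,j_D)$ yields exactly \eqref{CoeffSystem}.

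The argument reduces to two pieces of bookkeeping — the zero-diagonal observation that collapses both $C_{kk}(z)$ and the variance correction term to the single scalar $\alpha_{1,\ldots,1}(z)$, and the Kronecker eigenbasis computation — together with the invertibility of $M(z)$, so there is no genuine obstacle. The one point that requires care is keeping the two index families apart: $\bigotimes_d Y_{di_d}$ is indexed by the coefficient labels $i_d$, whereas the simultaneous eigenspaces are indexed by the spectral labels $j_d$, and one must verify that the scalars $\lambda_{di_d}(j_d)$ produced by $K_{M_d}$ and $I_{M_d}$ on the blocks coincide with the table in the corollary. The degenerate case $M_d = 1$, where $V_{d,1} = \{0\}$ and the corresponding equations become vacuous, is harmless and can simply be set aside.
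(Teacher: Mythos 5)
Your proposal is correct and follows essentially the same route as the paper's proof: collapse the diagonal of $C(z)$ and the variance correction term to the single scalar $\alpha_{1,\ldots,1}(z)$, then simultaneously diagonalize $B$, $C(z)$, and the correction matrix in the common Kronecker eigenbasis and equate block scalars to obtain \eqref{CoeffSystem}. Your explicit justification that the diagonal of $\bigotimes_d Y_{di_d}$ vanishes unless $i_1=\cdots=i_D=1$ (via the zero diagonal of $K_{M_d}$) makes precise a step the paper leaves implicit, but the argument is otherwise the same.
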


\begin{proof}

Using the Stieltjes transform expression \eqref{STEq} from Theorem \ref{GirkoK01Thm} and the derived form for $C(z)$ in \eqref{SolutionForm} from Theorem \ref{MainThm}, the Stieltjes transform of the deterministic equivalent distribution function $F_N$ specified in Theorem \ref{GirkoK01Thm} for the empirical spectral distribution of $W\left(\mathcal{G}_{\mathrm{perc}}\right)$ is computed as
\begingroup
\begin{equation}
\begin{aligned}
S_{F_N}\left(z\right)
&=\frac{1}{N}\sum_{k=1}^{N}{C_{kk}\left(z\right)}\\
&=\frac{1}{N}\sum_{k=1}^{N}{\alpha_{1,\ldots,1}\left(z\right)}
=\alpha_{1,\ldots,1}\left(z\right).
\end{aligned}
\end{equation}
\endgroup

In order to derive the system of equations describing the coefficients $\alpha_{i_1,\ldots,i_D}$ for $i_1,\ldots,i_D=0,1$, the value of the third matrix in the denominator of \eqref{GirkoK01Sys2} must first be computed.  Subsequently, the system of equations may be found by noting that all matrices in equation \eqref{GirkoK01Eq} may be simultaneously diagonalized by orthogonal matrices of eigenvectors.  Each distinct eigenvalue of $C\left(z\right)$ will result in an equation that the coefficients must satisfy.  Let $\beta$ be defined as in the proof of Theorem \ref{MainThm}.  Summing the variance over each row yields
\begingroup
\thinmuskip=\muexpr\thinmuskip*1/16\relax
\medmuskip=\muexpr\medmuskip*1/16\relax
\begin{equation}
\begin{aligned}
\sum_{s=1}^{N}{C_{ss}\left(z\right)\operatorname{E}\left[H_{js}^{2}\right]}
&=\hspace{-1em}\sum_{\left\|\beta\left(s\right)-\beta\left(j\right)\right\|_0=1}^{N}{\hspace{-1em}C_{ss}\left(z\right)\operatorname{E}\left[H_{js}^{2}\right]}\\
&\hspace{-3em}=\frac{p\left(1-p\right)}{\gamma^2}\left(\sum_{d=1}^{D}{\left(M_d-1\right)}\right)\alpha_{1,\ldots,1}\left(z\right)
\end{aligned}
\end{equation}
\endgroup
and, therefore, that
\begin{equation}
\begin{aligned}
\pushleft{\left(\delta_{lj}\sum_{s=1}^{N}{C_{ss}\left(z\right)\operatorname{E}\left[H_{js}^{2}\right]}\right)_{l,j=1}^{l,j=N}=}\\
\pushright{\hspace{2em}\frac{p\left(1-p\right)}{\gamma^2}\left(\sum_{d=1}^{D}{\left(M_d-1\right)}\right)\alpha_{1,\ldots,1}\left(z\right)I_N.}\label{DMat}
\end{aligned}
\end{equation}
Note that $C(z)$, $B$, and the matrix computed in \eqref{DMat} are simultaneously diagonalizable by Kronecker product properties.  Let $\lambda_{di_d}(0)$ be the eigenvalue of $Y_{di_d}$ corresponding to eigenvector $\mathbf{v}=\mathbf{1}$.  Let $\lambda_{di_d}(1)$ be the eigenvalue of $Y_{di_d}$ corresponding to any other eigenvector orthogonal to $\mathbf{v}=\mathbf{1}$.  Hence,
\begin{equation}
\lambda_{di_d}(j_d)=\left\{\begin{array}{ll} M_d-1 &  i_d=0,j_d=0 \\ \phantom{M_d-1}\mllap{-1} & i_d=0,j_d=1 \\ \phantom{M_d-1}\mllap{1} & i_d=1\phantom{j_d=0}\end{array}\right..
\end{equation}
The eigenvalues of $C\left(z\right)$ are indexed by $j_1,\ldots, j_D=0,1$ and are given by
\begin{equation}
\sum_{\mathclap{i_1,\ldots,i_D=0}}^1{\alpha_{i_1,\ldots,i_D}(z)\prod_{d=1}^{D}{\lambda_{di_d}\left(j_d\right)}}
\end{equation}
The corresponding eigenvalues of $B$ are given by
\begin{equation}
\frac{p}{\gamma}\left(\sum\limits_{d=1}^{D}\lambda_{d0}(j_d)\right)
\end{equation}
The corresponding eigenvalues of the matrix in \eqref{DMat} are given by
\begin{equation}
\frac{p\left(1-p\right)}{\gamma^2}\left(\sum_{d=1}^{D}{\left(M_d-1\right)}\right)\alpha_{1,\ldots,1}\left(z\right).
\end{equation}
Of course, the corresponding eigenvalue of $zI$ is $z$.  Through applying this diagonalization to the equation \eqref{GirkoK01Eq}, the system of $2^D$ equations indexed by $j_1,\ldots,j_D=0,1$ that the $2^D$ coefficients $\alpha_{i_1,\ldots,i_D}$ indexed by $i_1,\ldots,i_D=0,1$ satisfy is described by equation \eqref{CoeffSystem}.
\hfill $\blacksquare$

\end{proof}

\begin{figure*}[t]
\centering
\begin{tabular}{cccc}
\multicolumn{2}{c}{\includegraphics[width=.4\textwidth]{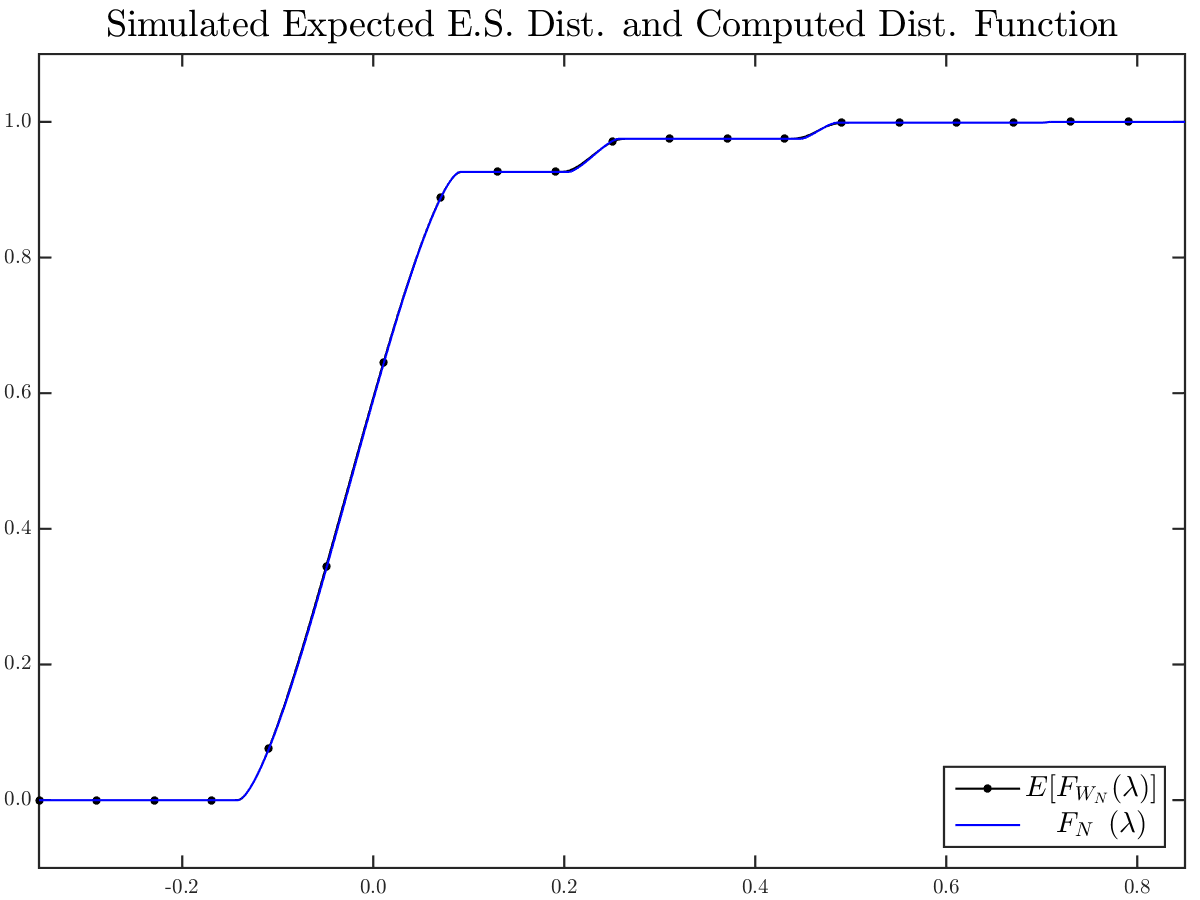}} & \multicolumn{2}{c}{\includegraphics[width=.4\textwidth]{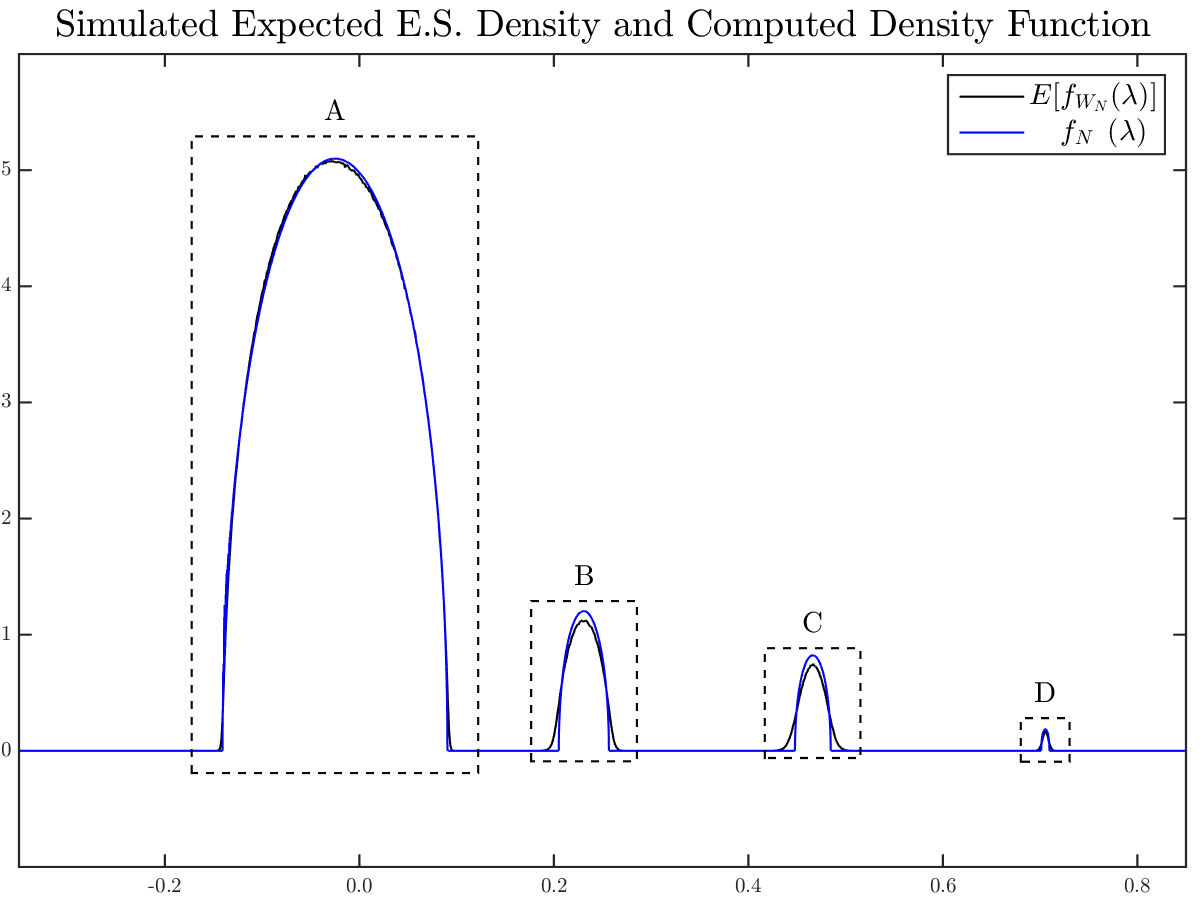}}\\
\includegraphics[width=.2\textwidth]{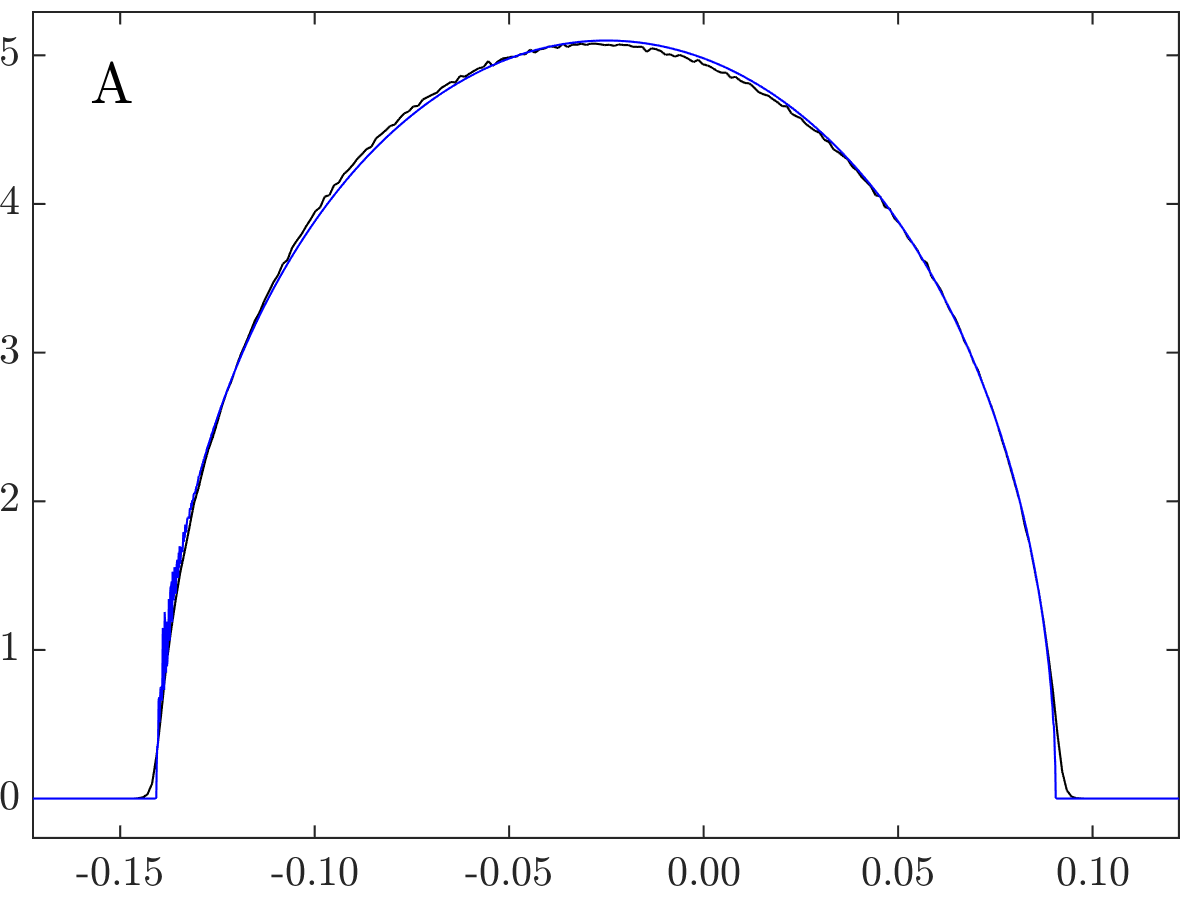} & \includegraphics[width=.2\textwidth]{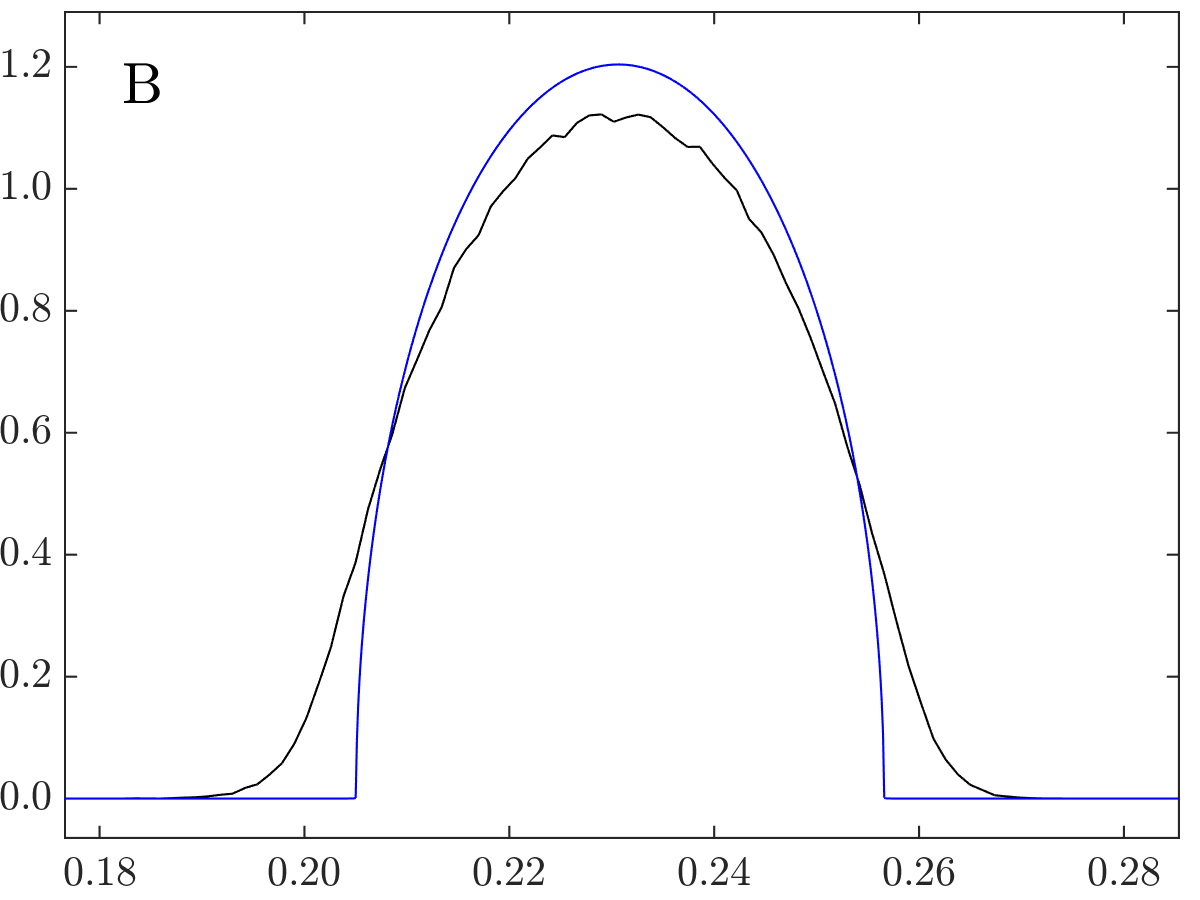} & \includegraphics[width=.2\textwidth]{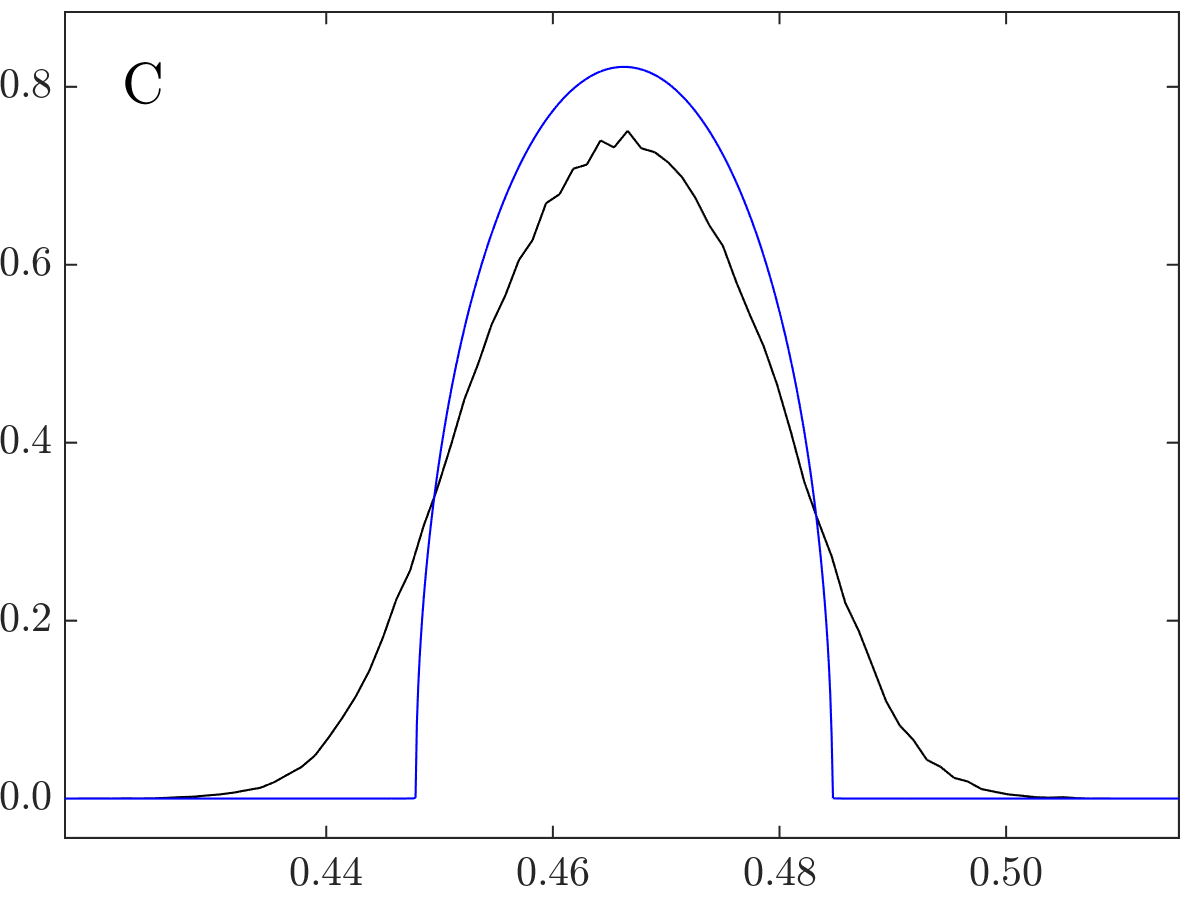} & \includegraphics[width=.2\textwidth]{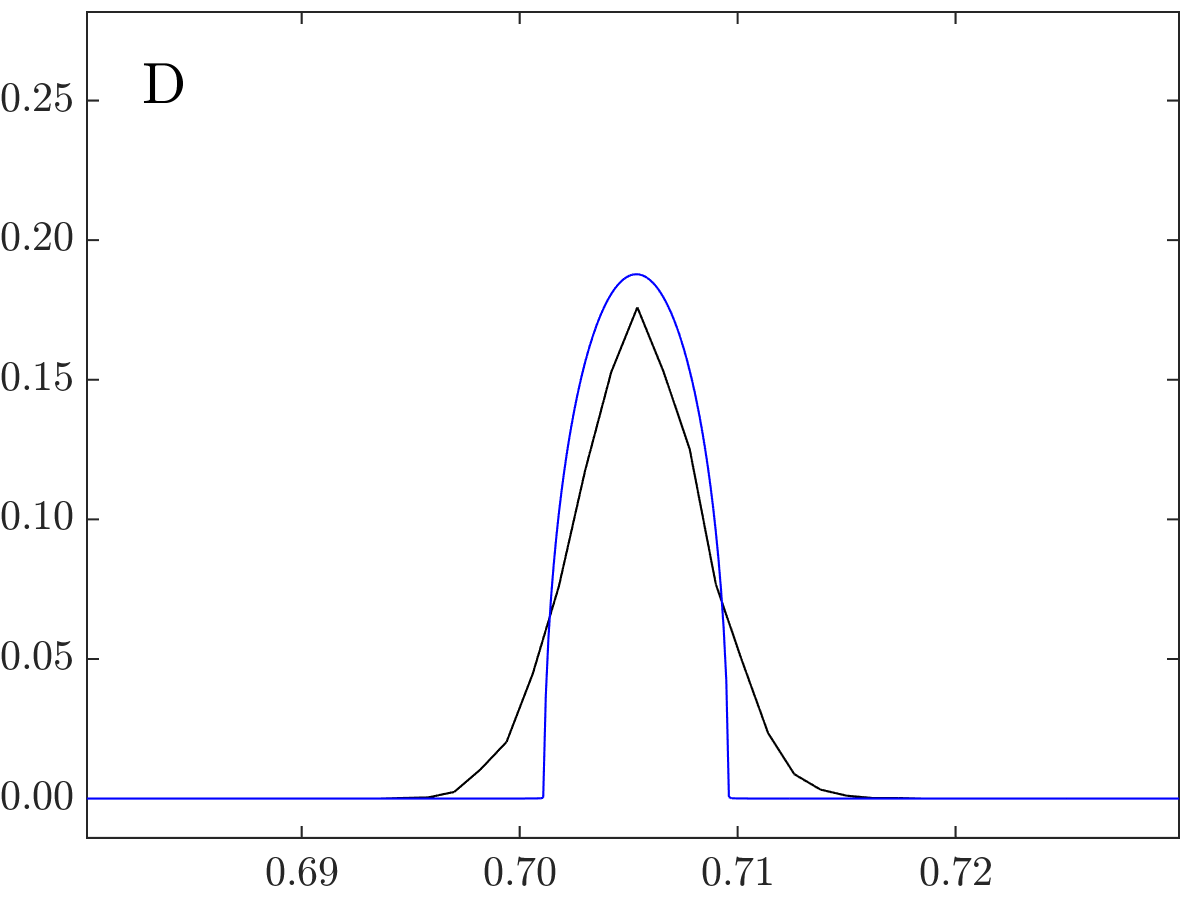}
\end{tabular}
\caption{For a graph percolation model $\mathcal{G}_\mathrm{perc}$ based on a two dimensional lattice with size $20~\mathrm{ nodes }\times 40~\mathrm{ nodes }$ and link inclusion probability $p=.7$, the expected empirical spectral distribution $\operatorname{E}\left[F_{W_N}\right]$ of the scaled adjacency matrix $W_N=\frac{1}{\gamma}A\left(\mathcal{G}_\mathrm{perc}\right)$ was computed over $10^4$ Monte-Carlo trials (black dotted line, top left).  The deterministic distribution $F_N$ computed through Corollary \ref{MainCor} was also computed (blue line, top left).  The corresponding density functions appear on the top right with labeled sections of interest more closely examined on the bottom row.}
\label{FigEx1}
\end{figure*}

\begin{figure*}[t]
\centering
\begin{tabular}{cccc}
\multicolumn{2}{c}{\includegraphics[width=.4\textwidth]{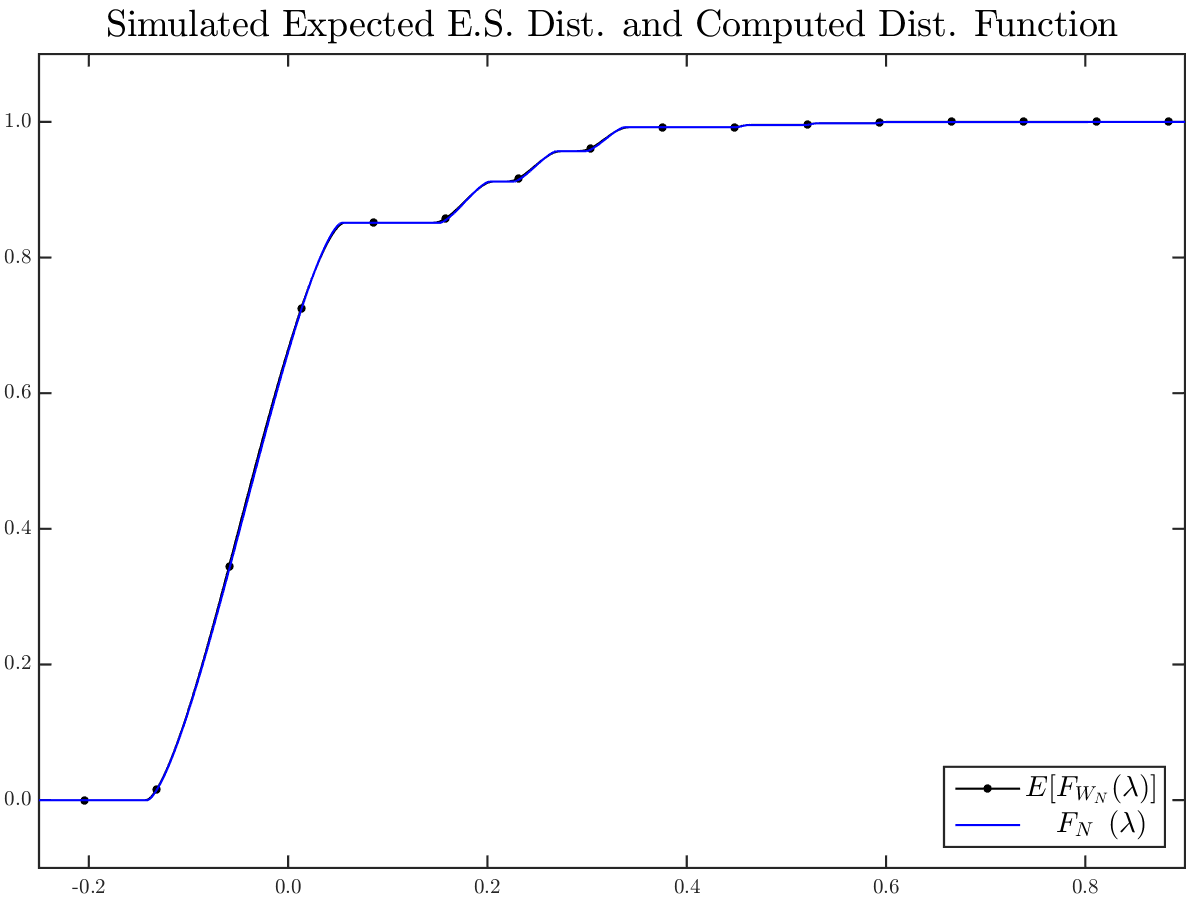}} & \multicolumn{2}{c}{\includegraphics[width=.4\textwidth]{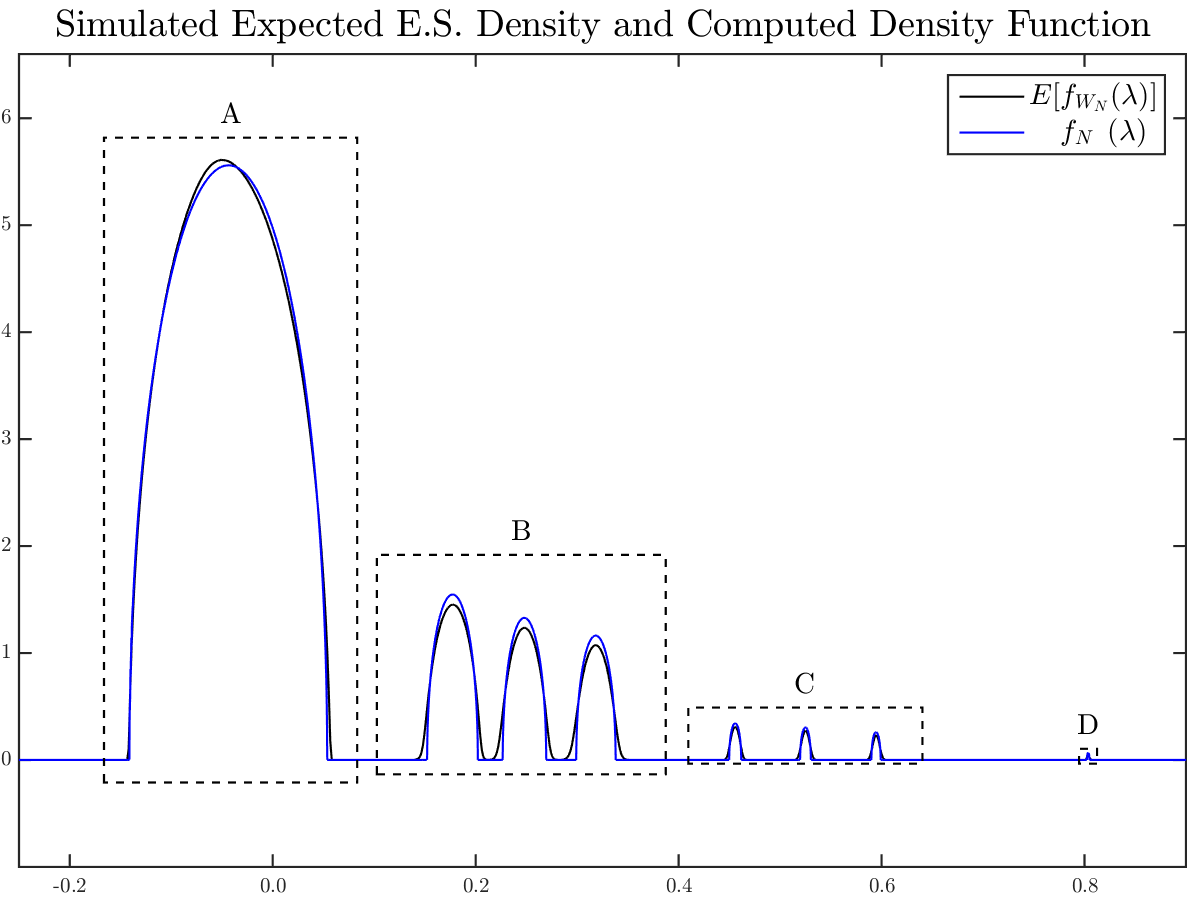}}\\
\includegraphics[width=.2\textwidth]{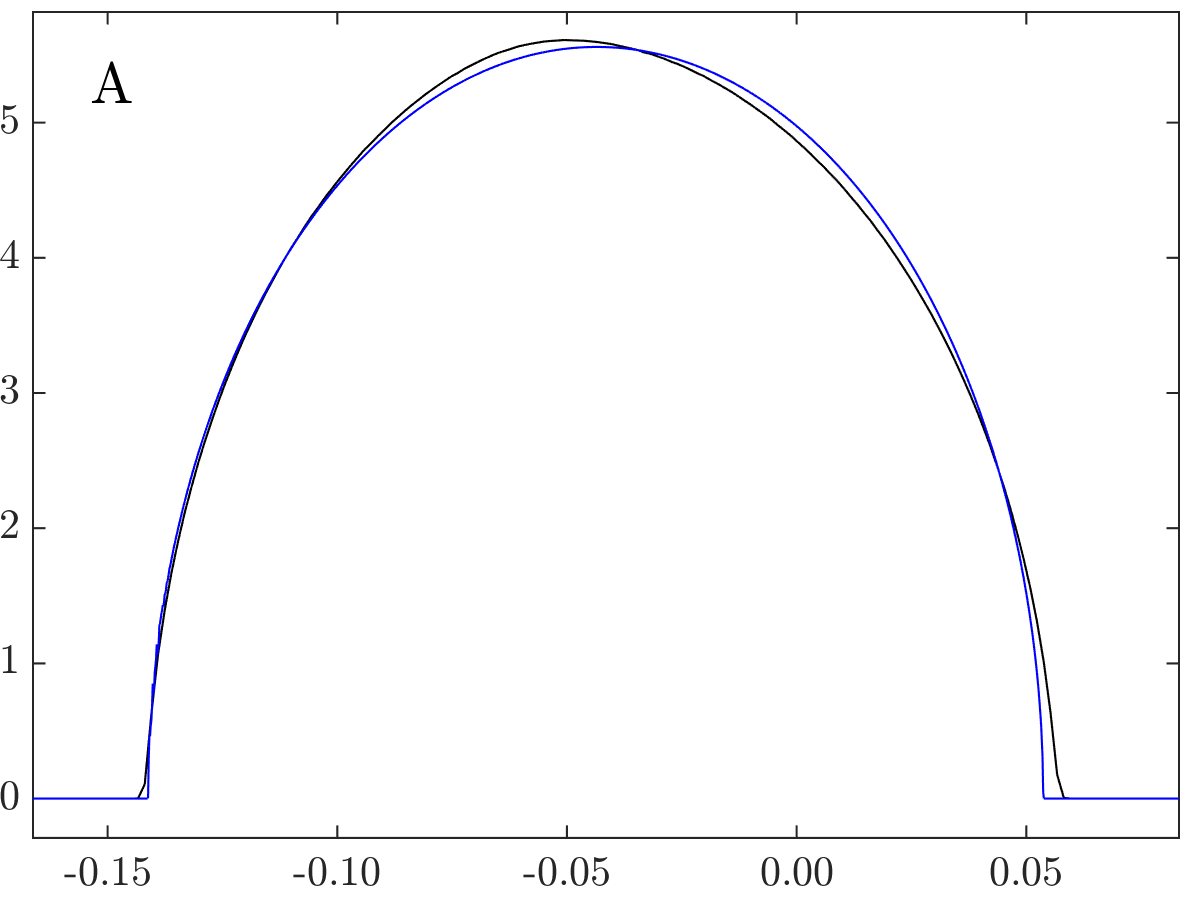} & \includegraphics[width=.2\textwidth]{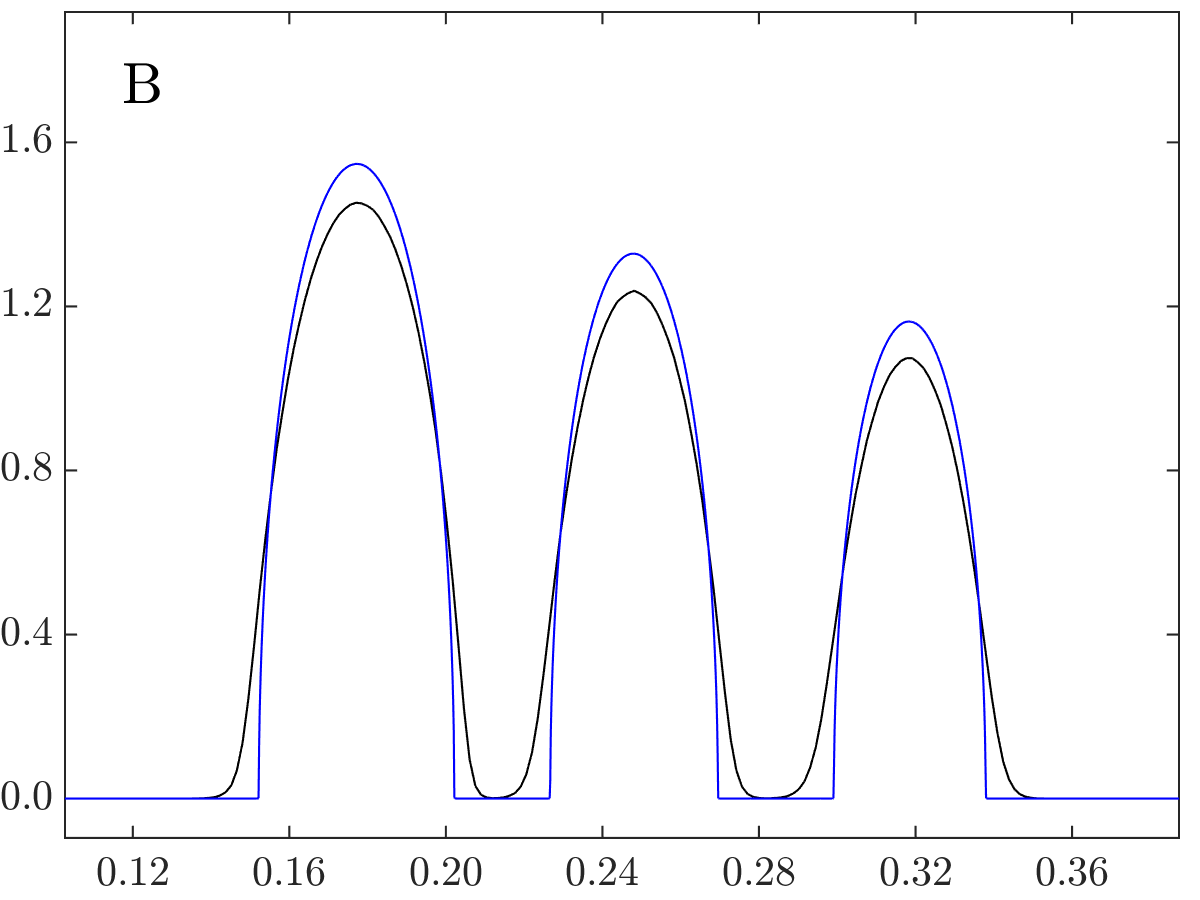} & \includegraphics[width=.2\textwidth]{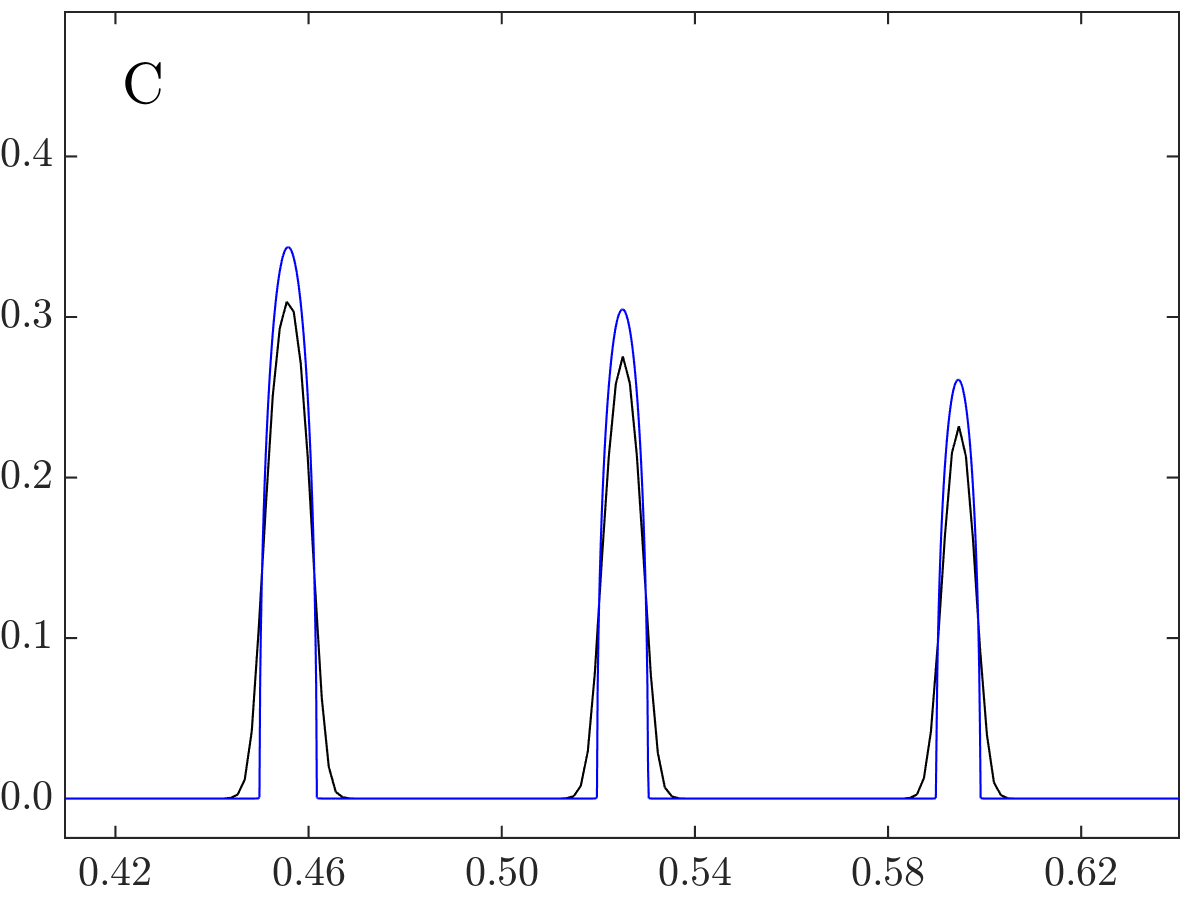} & \includegraphics[width=.2\textwidth]{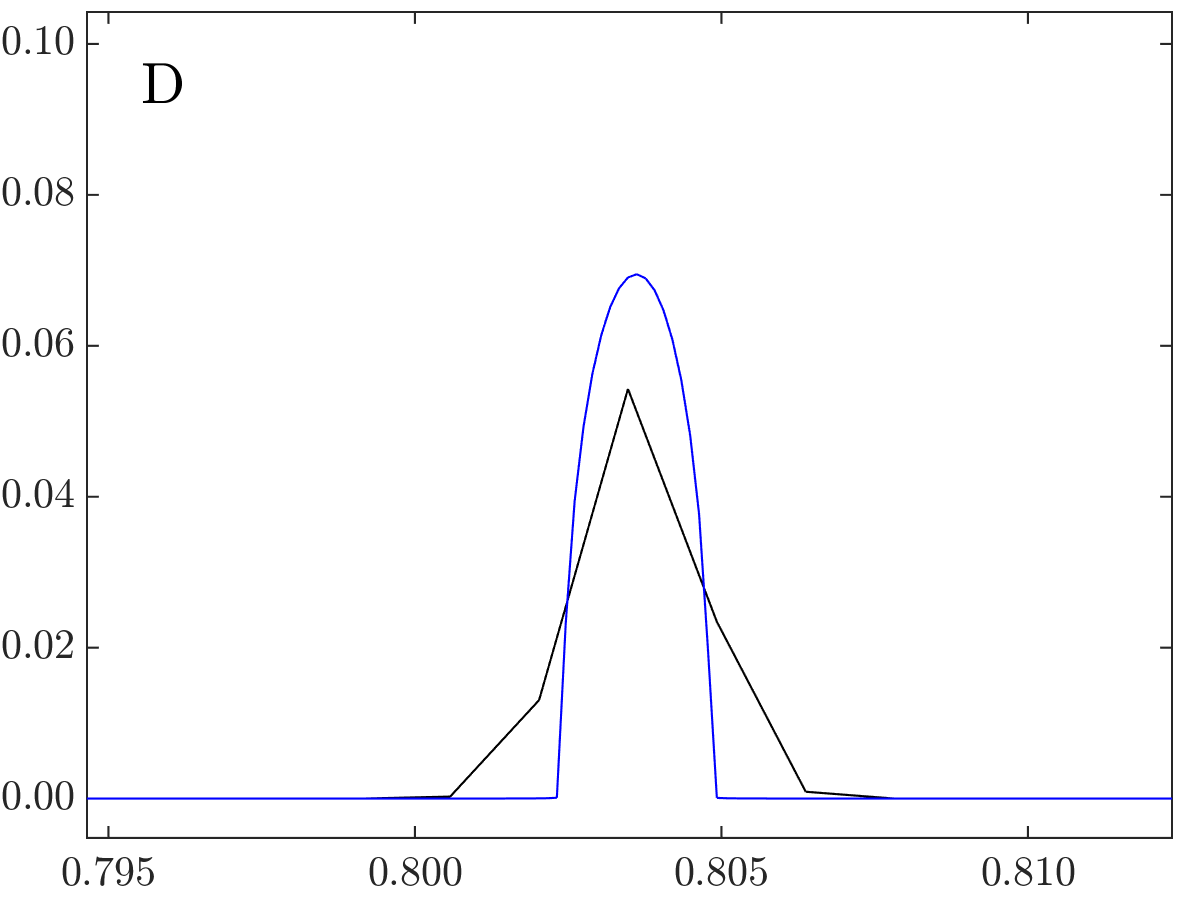}
\end{tabular}
\caption{For a graph percolation model $\mathcal{G}_\mathrm{perc}$ based on a two dimensional lattice with size $15 ~\mathrm{ nodes } \times 20~\mathrm{ nodes }\times 25~\mathrm{ nodes }$ and link inclusion probability $p=.8$, the expected empirical spectral distribution $\operatorname{E}\left[F_{W_N}\right]$ of the scaled adjacency matrix $W_N=\frac{1}{\gamma}A\left(\mathcal{G}_\mathrm{perc}\right)$ was computed over $10^4$ Monte-Carlo trials (black dotted line, top left).  The deterministic distribution $F_N$ computed through Corollary \ref{MainCor} was also computed (blue line, top left).  The corresponding density functions appear on the top right with labeled sections of interest more closely examined on the bottom row.}
\label{FigEx2}
\end{figure*}

\begin{remark}[Iterative Computation of Coefficients]\label{IterRmk}
Computation of the values of $\alpha_{i_1,\ldots,i_D}$ for $i_1,\ldots,i_D=0,1$ can be approached iteratively.  Let the $2^D$ dimensional vector $\boldsymbol{\alpha}$ collect the values of $\alpha_{i_1,\ldots,i_D}$ for $i_1,\ldots,i_D=0,1$.  Then the system of equations in \eqref{CoeffSystem} can be written as $X\boldsymbol{\alpha}=g\left(\boldsymbol{\alpha}\right)$ where $X$ is the invertible matrix describing the left side of the  equation and $g$ is the function describing the right side of the equation.  Selecting an initial guess $\boldsymbol{\alpha}_0$ and iterating such that $\boldsymbol{\alpha}_{n+1}=X^{-1}g\left(\boldsymbol{\alpha}_n\right)$ is a possible approach to search for the fixed point $\boldsymbol{\alpha}$.  While convergence of this process is not proven, it appears to work well in practice.

\end{remark}

Figure \ref{FigEx1} and Figure \ref{FigEx2} provide example deterministic equivalents for the empirical spectral distribution of scaled adjacency matrices for percolation models of lattice graphs with different lattice characteristics and link inclusion parameters as calculated using Corollary \ref{MainCor} and the methods in Remark \ref{IterRmk}.  The corresponding density function is also computed, and the expected empirical spectral distributions and densities are shown for comparison.  Asymptotically, as each lattice dimension size grows without bound, the area of the largest region of the density function approaches 1, and the area of the other regions approach 0.  Thus, Theorem \ref{GirkoK01Thm} does not guarantee the deterministic distribution function reflects the empirical spectral distribution in the regions of smaller increase.  Nevertheless, the observed characteristics seem to provide a good approximation.

Finally, Theorem \ref{NormalizedThm} describes the relationship between the empirical spectral distribution of the row-normalized adjacency matrix $\widehat{A}\left(\mathcal{G}_{\mathrm{perc}}\right)=\Delta^{-1}\left(\mathcal{G}_{\mathrm{perc}}\right)A\left(\mathcal{G}_{\mathrm{perc}}\right)$, where $\Delta\left(\mathcal{G}_{\mathrm{perc}}\right)$ is the diagonal matrix of node degrees, and the empirical spectral distribution of the scaled adjacency matrix $W\left(\mathcal{G}_{\mathrm{perc}}\right)=\frac{1}{\gamma}A\left(\mathcal{G}_{\mathrm{perc}}\right)$.  Because the row-normalized adjacency matrix and the symmetrically normalized adjacancy matrix are similar, the result also applies to both of those matrices.  With some manipulation, it can also be used to describe the row-normalized Laplacian and the symmetrically normalized Laplacian.  The theorem shows that the L\'{e}vy metric distance $d_{\scriptscriptstyle\mathrm{L}}\left(F_{p\sqrt{\gamma}\widehat{A}},F_{\sqrt{\gamma}W}\right)$ between $F_{p\sqrt{\gamma}\widehat{A}}$ and $F_{\sqrt{\gamma}W}$ approaches $0$ asymptotically.  The proof follows that of Lemma 5 of \cite{KAvr1} almost identically, with only some complications in applying Lemma 2.3 of \cite{CBor1} to bound the matrix entry differences.  Consequently, the deterministic equivalent $F_N$ computed in Corollary \ref{MainCor} can be used to approximate the empirical spectral distribution of $\widehat{A}$ as well.

\begin{theorem}\label{NormalizedThm}

Let $W\left(\mathcal{G}_{\mathrm{perc}}\right)=\frac{1}{\gamma}A\left(\mathcal{G}_{\mathrm{perc}}\right)$ be the scaled adjacency matrix of $\mathcal{G}_{\mathrm{perc}}\left(\mathcal{G}_{\mathrm{lat}},p\right)$ for scale factor $\gamma=\sum_{d=1}^{D}\left(M_d-1\right)$ with empirical spectral distribution $F_{W}$, and let $\widehat{A}\left(\mathcal{G}_{\mathrm{perc}}\right)=\Delta^{-1}\left(\mathcal{G}_{\mathrm{perc}}\right)A\left(\mathcal{G}_{\mathrm{perc}}\right)$ be the row-normalized adjacency matrix of $\mathcal{G}_{\mathrm{perc}}\left(\mathcal{G}_{\mathrm{lat}},p\right)$  with empirical spectral distribution $F_{\widehat{A}}$, where $\Delta\left(\mathcal{G}_{\mathrm{perc}}\right)$ is the diagonal matrix of node degress.  Also let $d_{\scriptscriptstyle\mathrm{L}}\left(\cdot,\cdot\right)$ be the L\'{e}vy distance metric.  Assume that all of the lattice dimension sizes increase without bound as $N\rightarrow \infty$.  Then,
\begin{equation}
\lim_{N\rightarrow\infty}{d_{\scriptscriptstyle\mathrm{L}}\left(F_{p\sqrt{\gamma}\widehat{A}},F_{\sqrt{\gamma}W}\right)}=0.
\end{equation}

\end{theorem}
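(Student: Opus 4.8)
The plan is to follow the argument of Lemma~5 of \cite{KAvr1}: bound the L\'{e}vy distance between the two empirical spectral distributions by the normalized Frobenius norm of the difference of appropriately symmetrized versions of the two matrices, and then drive that quantity to zero using concentration of the vertex degrees.

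First I would reduce to a symmetric comparison. The row-normalized adjacency matrix $\widehat{A}\left(\mathcal{G}_{\mathrm{perc}}\right)=\Delta^{-1}\left(\mathcal{G}_{\mathrm{perc}}\right)A\left(\mathcal{G}_{\mathrm{perc}}\right)$ is similar to the symmetrically normalized matrix $\widetilde{A}=\Delta^{-1/2}A\Delta^{-1/2}$, so $F_{\widehat{A}}=F_{\widetilde{A}}$ and it suffices to estimate $d_{\scriptscriptstyle\mathrm{L}}\left(F_{p\sqrt{\gamma}\,\widetilde{A}},F_{\sqrt{\gamma}W}\right)$, where $\sqrt{\gamma}W=\tfrac{1}{\sqrt{\gamma}}A$. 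I would then apply the difference inequality (a consequence of the Hoffman--Wielandt inequality; see \cite{RCou1}), which states that for $N\times N$ Hermitian matrices $X,Y$ one has
\begin{equation*}
d_{\scriptscriptstyle\mathrm{L}}\left(F_X,F_Y\right)^3\le\frac{1}{N}\operatorname{tr}\left(\left(X-Y\right)^2\right),
\end{equation*}
with $X=\tfrac{1}{\sqrt{\gamma}}A$ and $Y=p\sqrt{\gamma}\,\Delta^{-1/2}A\Delta^{-1/2}$. Writing $\delta_i=\sum_j A_{ij}\left(\mathcal{G}_{\mathrm{perc}}\right)$ for the degree of node $i$, the $\left(i,j\right)$ entry of $X-Y$ is $\tfrac{A_{ij}}{\sqrt{\gamma}}\left(1-p\gamma/\sqrt{\delta_i\delta_j}\right)$, so, using $A_{ij}^2=A_{ij}$,
\begin{equation*}
\frac{1}{N}\operatorname{tr}\left(\left(X-Y\right)^2\right)=\frac{1}{N\gamma}\sum_{i,j}A_{ij}\left(1-\frac{p\gamma}{\sqrt{\delta_i\delta_j}}\right)^2 .
\end{equation*}

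The remaining work is to show this bound vanishes. Each degree $\delta_i$ is a sum of $\gamma=\sum_{d=1}^{D}\left(M_d-1\right)$ independent Bernoulli$\left(p\right)$ variables, hence has mean $p\gamma$; a Chernoff bound together with a union bound shows that, for a suitable deterministic sequence $\epsilon_N\to0$, the event $\mathcal{E}_N=\{\,\left|\delta_i-p\gamma\right|\le\epsilon_N\,p\gamma\text{ for all }i\,\}$ satisfies $\operatorname{P}\left(\mathcal{E}_N^{c}\right)\le 2N e^{-c\epsilon_N^2 p\gamma}$. Converting this degree tail bound into a uniform bound on the entrywise factor $|1-p\gamma/\sqrt{\delta_i\delta_j}|$ is exactly the role of Lemma~2.3 of \cite{CBor1}. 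Because every lattice dimension size diverges, $\gamma/\log N\ge\min_d\tfrac{M_d-1}{\log M_d}\to\infty$, so $\operatorname{P}\left(\mathcal{E}_N^{c}\right)$ is summable and $\mathcal{E}_N$ holds eventually, almost surely, by Borel--Cantelli. On $\mathcal{E}_N$ all degrees are positive (so $\widetilde{A}$ is well defined), $|1-p\gamma/\sqrt{\delta_i\delta_j}|=O\left(\epsilon_N\right)$, and $\sum_i\delta_i\le\left(1+\epsilon_N\right)Np\gamma$; hence
\begin{equation*}
\frac{1}{N\gamma}\sum_{i,j}A_{ij}\left(1-\frac{p\gamma}{\sqrt{\delta_i\delta_j}}\right)^2\le\frac{O\left(\epsilon_N^2\right)}{N\gamma}\sum_i\delta_i=O\left(p\,\epsilon_N^2\right)\longrightarrow0,
\end{equation*}
and the difference inequality yields $d_{\scriptscriptstyle\mathrm{L}}\left(F_{p\sqrt{\gamma}\widehat{A}},F_{\sqrt{\gamma}W}\right)\to0$ almost surely, since on the negligible event $\mathcal{E}_N^{c}$ the L\'{e}vy distance is trivially at most $1$.

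The main obstacle is the factor $p\gamma/\sqrt{\delta_i\delta_j}$, which explodes at vertices of atypically small degree and is undefined at isolated vertices; controlling it uniformly over all $N$ vertices is precisely what forces the Chernoff-plus-union-bound step, and the reason this succeeds is the favorable balance between the node count $N=\prod_d M_d$ and the supergraph degree $\gamma=\sum_d\left(M_d-1\right)$ when all $M_d\to\infty$, so that the union bound over the $N$ small-degree events still vanishes. A secondary subtlety, handled as in \cite{KAvr1}, is that $A_{ij}$ and the degrees $\delta_i,\delta_j$ are dependent; since all estimates above are carried out on the high-probability event $\mathcal{E}_N$, this dependence causes no difficulty.
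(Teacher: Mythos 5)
Your proof is correct and shares the paper's overall skeleton: both follow Lemma 5 of \cite{KAvr1} by bounding the L\'{e}vy distance through the difference inequality of Lemma 2.3 of \cite{ZBai1} (normalized squared Frobenius norm of the matrix difference) and then driving that bound to zero via uniform concentration of the node degrees around $p\gamma$. Where you genuinely diverge is in how that concentration is obtained. The paper decomposes each degree $\Delta_{ii}$ into its $D$ contributions along the lattice dimensions, applies the row-sum concentration result (Lemma 2.3 of \cite{CBor1}) to the $M_d\times M_d$ submatrix on each dimension, and assembles the pieces into $\Delta_{ii}=\gamma\left(p+\epsilon_i\right)$ with $\max_i\left|\epsilon_i\right|=\oo\left(1\right)$; you instead note that each degree is Binomial$\left(\gamma,p\right)$ and run a Chernoff bound plus a union bound over the $N$ nodes, with Borel--Cantelli supplying almost sure eventual containment in the good event. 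Your route is more self-contained and makes explicit exactly why every $M_d\rightarrow\infty$ is needed: the union bound requires $\gamma\gg\log N$, which is the inequality $\gamma/\log N\geq\min_d\left(M_d-1\right)/\log M_d\rightarrow\infty$ you record. You are also more careful on two points the paper leaves implicit: you pass to the similar symmetric matrix $\Delta^{-1/2}A\Delta^{-1/2}$ before invoking the difference inequality (which is stated for Hermitian matrices, while $\Delta^{-1}A$ is not symmetric), and you explicitly dispose of the low-probability event on which some degree is atypically small or zero and $\Delta^{-1}$ could fail to exist. One small misattribution: Lemma 2.3 of \cite{CBor1} is not a device for ``converting a tail bound into a uniform bound''---in the paper it \emph{is} the concentration estimate itself---but since your argument replaces it entirely with the Chernoff computation, this does not affect correctness.
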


\begin{proof}

Following the proof methods of Lemma 5 of \cite{KAvr1}, the row sums $\Delta_{ii}$ are first computed as $\gamma\left(p+\epsilon_i\right)$ where $\epsilon=\max_{i}\left|\epsilon_i\right|=\oo\left(1\right)$.  Subsequently, the bound on the cubed L\'{e}vy distance found in Lemma 2.3 of \cite{ZBai1} is evaluated and shown to asymptotically approach $0$.  Consequently, the result holds.  The first of these steps is less straightforward than the equivalent in \cite{KAvr1}.

Let $\beta\left(x,d\right)$ be defined as before.  For a given node index $i$ and lattice dimension $d$, form a $M_d \times M_d$ matrix $Q\left(i,d\right)$ from all entries $A_{lj}$ such that $\beta\left(l,k\right)=\beta\left(j,k\right)=\beta\left(i,k\right)$ for $k\neq d$.  Note that this matrix is symmetric with zeros on the diagonal and identically distributed Bernoulli random variables with parameter $p$ for all entries off the diagonal.  These variables are independent, except for symmetry relations.  Form a $M_d \times M_d$ matrix $R\left(i,d\right)$ equal to $Q\left(i,d\right)$ on the off diagonal but with an identical Bernoulli random variable added to the diagonal element.  By Lemma 2.3 of \cite{CBor1},
\begingroup
\thinmuskip=\muexpr\thinmuskip*1/16\relax
\medmuskip=\muexpr\medmuskip*1/16\relax
\begin{equation}
\begin{aligned}
\sum_{s=1}^{M_d}R_{l,s}\left(i,d\right)&=M_d\left(p+\delta_{d,l}\right)\\
&=\left(M_d-1\right)\left(p+\delta_{i,d,l}\right)+\left(p+\delta_{i,d,l}\right)
\end{aligned}
\end{equation}
\endgroup
where $\max_{l}\left|\delta_{i,d,l}\right|=\oo(1)$.
Thus, note that
\begin{equation}
\sum_{s=1}^{M_d}Q_{l,s}\left(i,d\right)=\left(M_d-1\right)\left(p+\kappa_{i,d,l}\right)
\end{equation}
where $\left(p+\delta_{i,d,l}-1\right)/M_d \leq \kappa_{i,d,l} \leq \left(p+\delta_{i,d,l}\right)/M_d $ so $\max_{l}\left|\kappa_{i,d,l}\right|=\oo(1)$.
Hence, $\Delta_{ii}$ may be computed as
\begin{equation}
\begin{aligned}
\Delta_{ii}=\sum_{j=1}^{N}{A_{ij}}
&=\sum_{d=1}^{D}{\sum_{s=1}^{M_d}{Q_{\beta\left(i,d\right),s}\left(i,d\right)}}\\
&=\sum_{d=1}^{D}{\left(M_d-1\right)\left(p+\kappa_{i,d,\beta\left(i,d\right)}\right)}\\
&=\gamma\left(p+\epsilon_i\right)
\end{aligned}
\end{equation}
where
\begin{equation}
\epsilon_i=\frac{1}{\gamma}\sum_{d=1}^{D}{\left(M_d-1\right)\kappa_{i,d,\beta\left(i,d\right)}}.
\end{equation}
Note that
\begin{equation}
\max_i\left|\epsilon_i\right|\leq \max_{i}{\max_d{\max_l{\left|\kappa_{i,d,l}\right|}}}
\end{equation}
so $\epsilon=\max_i\left|\epsilon_i\right|=\oo\left(1\right)$.
By Lemma 2.3 of \cite{ZBai1},
\begin{equation}
d_{\scriptscriptstyle\mathrm{L}}\left(F_{p\sqrt{\gamma}\widehat{A}},F_{\sqrt{\gamma}W}\right)\leq \frac{1}{n}\left\|\sqrt{\gamma}W-p\sqrt{\gamma}\widehat{A}\right\|_{F}^2.
\end{equation}
Note that $W_{ij}=\frac{1}{\gamma}A_{ij}$ and that
\begin{equation}
\begin{aligned}
p\widehat{A}_{ij}&=\frac{p}{\Delta_{ii}}A_{ij}\\
&=\frac{p}{\gamma\left(p+\epsilon_i\right)}A_{ij}\\
&=\frac{1}{\gamma}A_{ij}\left(1+\OO\left(\epsilon_i\right)\right)\\
&=\frac{1}{\gamma}A_{ij}\left(1+\OO\left(\epsilon\right)\right)
\end{aligned}
\end{equation}
Hence,
\begin{equation}
\begin{aligned}
\left\|\sqrt{\gamma}W-p\sqrt{\gamma}\widehat{A}\right\|_{F}^2
&=\frac{\gamma}{n}\sum_{i,j=1}^{N}\frac{1}{\gamma^2}A_{ij}^2\OO\left(\epsilon^2\right)\\
&=\frac{1}{n\gamma}\sum_{i,j=1}^{N}A_{ij}^2\OO\left(\epsilon^2\right)\rightarrow 0
\end{aligned}
\end{equation}
because $\epsilon=\oo\left(1\right)$.  Therefore, it has been shown that $d_{\scriptscriptstyle\mathrm{L}}\left(F_{p\sqrt{\gamma}\widehat{A}},F_{\sqrt{\gamma}W}\right)\rightarrow 0$.\hfill$\blacksquare$
\end{proof}

\section{Conclusion}\label{SecConclusion}

The eigenvalues of matrices that encode network information, such as the adjacency matrix and the Laplacian, provide important information that can be used, for instance, in polynomial filter design for graph signal processing.  This paper examines the empirical spectral distribution of the scaled adjacency matrix eigenvalues for a random graph model formed by independently including or excluding each link of a $D$-dimensional lattice supergraph, a percolation model.  A stochastic canonical equations theorem provided by Girko can be applied to find a sequence of deterministic functions that asymptotically approximates the empirical spectral distribution as the number of nodes increases.  Through this theorem, the Stieltjes transforms of these deterministic distributions can be found by solving a certain system of equations.  The primary contributions of this paper focus on solving this system of equations for the random network model studied with arbitrary lattice and link inclusion parameters.  Specifically, Theorem \ref{MainThm} finds the form of the matrix that satisfies the system of equations, resulting in a matrix with $2^D$ parameters.  Subsequently, Corollary \ref{MainCor} derives a system of $2^D$ equations that describe these parameters, significantly reducing the original system of equations.  The resulting distributions can also be related to the empirical spectral distribution of the symmetrically normalized adjacency matrix, row normalized adjacency matrix, symmetrically normalized Laplacian, and row-normalized Laplacian.  Simulations for sample model parameters validate the results.  While the methods used guarantee that the computed deterministic distribution function asymptotically captures the behavior of the bulk of the eigenvalues, no guarantees are made about regions where the fraction of eigenvalues asymptotically vanishes.  Although the observed correspondence between the computed and simulated distributions appears good, future efforts could include a more precise characterization.  Additional topics for possible expansion could also include evaluation of the degree to which filter design benefits from the distribution information gained as well as analysis of additional random network models of interest.

\end{document}